\definecolor{MediumGray}{gray}{0.6}
\newcommand\TikZ[1]{\begin{matrix}\begin{tikzpicture}#1\end{tikzpicture}\end{matrix}}
\numberwithin{equation}{section}
\theoremstyle{plain}
\newtheorem{theorem}[equation]{Theorem}
\newtheorem{prop}[equation]{Proposition}
\newtheorem{lemma}[equation]{Lemma}
\newtheorem{corollary}[equation]{Corollary}
\theoremstyle{definition}
\newtheorem{example}[equation]{Example}
\newtheorem{remark}[equation]{Remark}
\newtheorem{definition}[equation]{Definition}
\newcommand{\bs}{\mathbf{s}}
\newcommand{\blambda}{\boldsymbol{\lambda}}
\newcommand{\bmu}{\boldsymbol{\mu}}
\newcommand{\bsigma}{\boldsymbol{\sigma}}
\def\bbZ{{\mathbb{Z}}}
\def\bbN{{\mathbb{N}}}
\def\bbk{{\mathbb{k}}}
\def\down{\vee}
\def\up{\wedge}
\def\updown{\mathsf{w}_{\up\down}}
\def\cupdown{\mathsf{c}_{\up\down}}
\begin{document}
\title[Decomposition numbers for unipotent blocks with small $\mathfrak{sl}_2$-weight]{Decomposition numbers for unipotent blocks with small  $\mathfrak{sl}_2$-weight in finite classical groups}
\author{Olivier Dudas}
\address[O.D.]{CNRS \& Aix-Marseille Universit\'e \\
Institut de Math\'ematiques de Marseille \\
Campus de Luminy, 13009 Marseille, France}
\email{olivier.dudas@univ-amu.fr}
\author{Emily Norton}
\address[E.N.]{School of Mathematics, Statistics and Actuarial Science \\ University of Kent, Canterbury, CT2 7FS, United Kingdom}
\email{E.Norton@kent.ac.uk}

\begin{abstract}
We show that parabolic Kazhdan-Lusztig polynomials of type $A$ compute the decomposition numbers in certain Harish-Chandra series of unipotent characters of finite groups of Lie types $B$, $C$ and $D$ over a field of non-defining characteristic $\ell$. Here, $\ell$ is a ``unitary prime" -- the case that remains open in general. The bipartitions labeling the characters in these series are small with respect to $d$, the order of $q$ mod $\ell$, although they occur in blocks of arbitrarily high defect. Our main technical tool is the categorical action of an affine Lie algebra on the category of unipotent representations, which identifies the branching graph for Harish-Chandra induction with the $\widehat{\mathfrak{sl}}_d$-crystal on a sum of level $2$ Fock spaces. Further key combinatorics has been adapted from Brundan and Stroppel's work on Khovanov arc algebras to obtain the closed formula for the decomposition numbers in a $d$-small Harish-Chandra series. 
\end{abstract}

\maketitle

\section*{Introduction}

For a finite group $G$, decomposition numbers encode how ordinary irreducible representations behave after reduction  modulo a prime number $\ell$. In this paper, we study the case where $G$ is one the finite classical groups $\mathrm{SO}_{2n+1}(q)$,  $\mathrm{Sp}_{2n}(q)$,  
$\mathrm{O}_{2n}^{\pm}(q)$ and $\ell$ is prime to $q$ (non-defining characteristic). 

\smallskip

Computing the whole decomposition matrix can be done blockwise, and using the Jordan decomposition one can often restrict to the unipotent blocks, which are the ones containing unipotent characters (see \cite[Thm. 11.8]{BR02}). However, even for those blocks, computing  the decomposition numbers explicitly seems out of reach. Still, for finite general linear groups, they can be related to Kazhdan--Lusztig polynomials, at least when $\ell$ is large enough with respect to $n$. Our main result gives a similar interpretation for classical groups, see Corollary~\ref{cor:final}, but in a much more restrictive case. 

\smallskip

Let $d$ be the multiplicative order of $q$ in $\mathbb{F}_\ell^\times$. When $d$ is odd (the \emph{linear prime} case), the representation theory of unipotent blocks of classical groups is governed by that of finite general linear groups. When $d$ is even (the \emph{unitary prime} case), much less is known. For example, the decomposition numbers for unipotent blocks of $\mathrm{Sp}_{2n}(q)$ in the unitary case have been determined up to $n = 4$ only, and quite recently: in 1998 for $n=2$ \cite{OW98}, in 2014 for $n=3$ \cite{HN14} and in 2022 for $n=4$ \cite{DM23}. The standard strategy for computing these numbers is the following: 
\begin{itemize}
 \item[(1)] Compute the projective cover of cuspidal  representations ;
  \item[(2)] Decompose the module obtained by Harish-Chandra induction of that projective cover.
\end{itemize}

\smallskip

We will work under two assumptions which will allow to solve these two problems: first, we will restrict our ordinary irreducible characters to a given Harish-Chandra series.  Second, we will impose a smallness condition with respect to $d$ on the Lusztig symbols in the series.  The exact condition is given in Section~\ref{ssec:dsmall}.  In this setting, cuspidal representations, when they occur, are always the ``smallest'' within the series with respect to the partial order on symbols. Thus problem (1) is no problem thanks to the unitriangular shape of the decomposition matrix. Everything then hinges on problem (2) -- and it turns out to be tractable for these series. Note that we end up working with blocks of any defect, even though in our situation the decomposition numbers turn out to be either 0 or 1.  

\smallskip
Let us be more precise on the behaviour of (2). Using the $\widehat{\mathfrak{sl}}_d$-action constructed in \cite{DVV17}, one can break the Harish-Chandra induction functor into pieces, and obtain an $i$-induction functor $F_i$ for each $i \in \mathbb{Z}/d\mathbb{Z}$. If $P_S$ is the projective cover of a simple module $S$ of highest weight (for the action of $\mathfrak{sl}_2$ corresponding to some $i  \in \mathbb{Z}/d\mathbb{Z}$), then $F_i^n(P_S)$ is very close to being the projective cover of $F_i^n(S)$. The difference involves only characters which are in the image of $F_i^{n+1}$, see Proposition~\ref{prop:decnumbers} for an exact statement. With our assumption on the Harish-Chandra series, all these extra characters lie in another series, hence do not contribute to the decomposition numbers we are interested in.

\smallskip
The explicit computation of the decomposition numbers now boils down to knowing exactly how to compute the $i$-induction on unipotent characters and simple modules in characteristic~$\ell$. Under our assumptions, one can attach to each irreducible character/module an up-down diagram and a cup diagram as in \cite{BS1,Bru} and obtain an elementary description of the action of $F_i$ on these combinatorial data as in \cite{BS3}. See Section \ref{sec:diagrammatics}, which takes its cue from the analogous situation for Hecke algebras at $d=\infty$ \cite{BS3,Bru}. This combinatorial set-up positions us to prove our main theorem, Theorem~\ref{thm:main}, in an inductive way using Proposition \ref{prop:decnumbers}. The latter exploits the small highest weight of the simple $\mathfrak{sl}_2$-modules in which such
 characters are found with respect to the categorical action on the category of representations. We observe that we get the exact same formula as in \cite{Bru,BS1}, thus relating the decomposition numbers within the Harish-Chandra series to the parabolic Kazhdan--Lusztig polynomials for a maximal parabolic in type $A$.

\section{Categorical $\mathfrak{sl}_2$-action}

In this section we prove a formula relating the character of the projective cover of a simple module $S$ and the character of the projective cover of its highest weight support. Only part of the character can be controlled, but this will be sufficient to show in Section~\ref{sec:proof} how to compute decomposition numbers from the case of cuspidal simple modules.

\subsection{Recollection on categorical actions}
We recall here some of the features of $\mathfrak{sl}_2$-actions on categories as defined in \cite{CR08,R12}.

\smallskip

Let $\mathbb{\Lambda}$ be a ring with unit and $\mathcal{V}$ be a $\mathbb{\Lambda}$-linear abelian category.
A \emph{categorical datum} on $\mathcal{V}$ is given by a pair of biadjoint exact endofunctors $E$ and $F$
of $\mathcal{V}$, together with two natural transformations $X \in \mathrm{End}(E)$ and $T \in \mathrm{End}(E^2)$
satisfying the relations given in \cite[3.3.3]{R12} in the case of $\mathfrak{sl}_2$. Equivalently, we require that $X$ and $T$ induce an action of the affine nil-Hecke algebra of $\mathfrak{S}_n$
on the functor $E^n$ for all $n \geq 0$. Using that structure one can define the divided power functors $E^{(n)}$ and  $F^{(n)}$ 
which are still exact and biadjoint. They satisfy
$$ E^n \simeq \big(E^{(n)}\big)^{\oplus n!} \quad \text{and} \quad F^n \simeq \big(F^{(n)}\big)^{\oplus n!}.$$

Assume now that $\mathbb{\Lambda}$ is a field and that  $\mathcal{V}$ has finite length. 
An \emph{$\mathfrak{sl}_2$-categorical action} on $\mathcal{V}$ is given by a categorical datum
$(E,F,X,T)$ and a decomposition 
$$\mathcal{V} = \bigoplus_{\omega \in \mathbb{Z}} \mathcal{V}_\omega$$
of $\mathcal{V}$ into abelian categories (which we will call \emph{weight categories}).
Furthermore, the functors $E$ and $F$ should shift the weights by $2$ and $-2$ respectively
 $$ \xymatrix{ \mathcal{V}_\omega  \ar@/^/[rr]^{E}
 \ar@/_/@{<-}[rr]_{F} & & \mathcal{V}_{\omega + 2}}$$
such that in the Grothendieck group $K_0(\mathcal{V}_\omega)$, the commutator $[E][F] - [F][E]$ acts by multiplication by $\omega$. 
In particular the class $e = [E]$ and $f=[F]$ of the functors in the complexified Grothendieck group
$V = \mathbb{C}\otimes_\mathbb{Z} K_0(\mathcal{V})$ induce an action of $\mathfrak{sl}_2$ for which
the weight space of weight $\omega$ is exactly $V_\omega =  \mathbb{C}\otimes_\mathbb{Z} K_0(\mathcal{V}_\omega)$.
Note that we will always assume that this action is integrable, so that $e$ and $f$ are locally nilpotent. 
 
\smallskip

For such a notion of $\mathfrak{sl}_2$-action on a category, the divided power functors satisfy the following identity on weight spaces of weight $\omega \geq 0$, see \cite[Lem. 4.8]{R12}
\begin{equation}
\label{eq:Mackey}
{E^{(n)} F^{(n)}}_{\big|\mathcal{V}_\omega}  \simeq 1_{\mathcal{V}_\omega}^{\oplus \binom{\omega}{n}} \oplus \big(FE_{\big|\mathcal{V}_\omega}\big)^{\oplus \binom{\omega}{n-1}}  \oplus \big({F^{(2)}E^{(2)}}_{\big|\mathcal{V}_\omega}\big)^{\oplus \binom{\omega}{n-2}} \oplus \cdots \oplus  {F^{(n)}E^{(n)}}_{\big|\mathcal{V}_\omega}.
\end{equation}

We now assume that $\mathrm{End}(S) \simeq \mathbb{\Lambda}$ for all simple objects $S \in \mathrm{Irr}\,\mathcal{V}$. In that case 
it is proven in \cite[Prop. 5.20]{CR08} that $E(S)$, if non-zero, has simple socle and head and that they are isomorphic. Successive applications
of $E$ give a highest weight semi-simple object. We will use the further following properties, which are also proved in \cite[Prop. 5.20]{CR08}.

\begin{lemma}
\label{lem:cat}
Let $S \in \mathrm{Irr}\,\mathcal{V_\omega}$ and $n \geq 0$ be such that $E^{n+1}(S) = 0$ and $E^n(S) \neq 0$.
\begin{enumerate} 
 \item $E^{(n)} (S)$ is simple.
 \item The socle and head of $F^{(n)} E^{(n)}(S)$ are isomorphic to $S$. 
 \item The simple module $S$ occurs in $F^{(n)} E^{(n)}(S)$ with multiplicity $\binom{\omega+2n}{n}$ as a composition factor.
\end{enumerate}
\end{lemma}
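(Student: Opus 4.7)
The plan is to prove the three parts together, leveraging biadjointness of the pair $(E^{(n)}, F^{(n)})$, the Mackey-type identity (\ref{eq:Mackey}), and the property recalled just before the lemma that $E$ and $F$ send a simple object either to zero or to an object with simple socle and head isomorphic to each other, following the template of \cite[Prop.\ 5.20]{CR08}.

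The first step is a setup computation. Using $E^n \simeq (E^{(n)})^{\oplus n!}$ together with the hypotheses, one gets immediately that $E^{(n)}(S) \neq 0$ while $E \circ E^{(n)}(S) = 0$. Pick any simple constituent $T$ of $E^{(n)}(S)$; then $T \in \mathcal{V}_{\omega + 2n}$ is annihilated by $E$, so all the terms $F^{(k)} E^{(k)}(T)$ with $k \geq 1$ appearing in (\ref{eq:Mackey}) vanish, leaving the clean identity
\begin{equation*}
E^{(n)} F^{(n)}(T) \;\simeq\; T^{\oplus \binom{\omega+2n}{n}}.
\end{equation*}
This identity will be the computational engine for all three parts.

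For part (1), biadjointness turns the non-vanishing of $\operatorname{Hom}(E^{(n)}(S), T)$ into $\operatorname{Hom}(S, F^{(n)}(T)) \neq 0$, so $S$ embeds into $F^{(n)}(T)$. Next, by iterating the simple-socle-equals-head property along the chain $F^n \simeq (F^{(n)})^{\oplus n!}$, one shows that $F^{(n)}(T)$ itself has simple socle and head, isomorphic to each other; the embedding then pins both down to $S$. Dualising, the exact functor $E^{(n)}$ sends the embedding $S \hookrightarrow F^{(n)}(T)$ to $E^{(n)}(S) \hookrightarrow E^{(n)} F^{(n)}(T) \simeq T^{\oplus \binom{\omega+2n}{n}}$, so $E^{(n)}(S)$ is semisimple and $T$-isotypic; its multiplicity is
\begin{equation*}
\dim \operatorname{Hom}(E^{(n)}(S), T) \;=\; \dim \operatorname{Hom}(S, F^{(n)}(T)) \;=\; 1,
\end{equation*}
because the socle of $F^{(n)}(T)$ is the simple $S$. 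Hence $E^{(n)}(S) \cong T$ is simple.

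Part (2) is then immediate from the simple socle/head of $F^{(n)}(T) = F^{(n)} E^{(n)}(S)$ established above. For part (3), apply the exact functor $E^{(n)}$ to a composition series of $F^{(n)}(T)$ and match with $E^{(n)} F^{(n)}(T) \simeq T^{\oplus \binom{\omega+2n}{n}}$: a composition factor $S'$ of $F^{(n)}(T)$ with $E^{(n)}(S') \neq 0$ satisfies $\operatorname{Hom}(S', F^{(n)}(T)) \supseteq \operatorname{Hom}(E^{(n)}(S'), T) \neq 0$, and combined with the simple socle being $S$ this forces $S' \cong S$; summing multiplicities yields $[F^{(n)} E^{(n)}(S) : S] = \binom{\omega+2n}{n}$. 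The step I expect to be most delicate is transferring the simple-socle-and-head property from $F$ to the divided power $F^{(n)}$, since the decomposition $F^n \simeq (F^{(n)})^{\oplus n!}$ gives information about $F^n$ as a whole rather than about the idempotent summand $F^{(n)}$, and the affine nil-Hecke action on $F^n$ has to be used to isolate the behaviour of $F^{(n)}$.
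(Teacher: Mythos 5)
The paper does not actually prove this lemma: it imports it wholesale from \cite[Prop.~5.20]{CR08}, so there is no internal argument to compare against. Your reconstruction of how parts (1)--(3) follow from the Mackey identity \eqref{eq:Mackey} plus the simple-socle/head property of divided powers on simples is essentially the right organization (and matches how these statements are packaged in Chuang--Rouquier): the reduction to a highest-weight simple $T$ with $ET=0$, the identity $E^{(n)}F^{(n)}(T)\simeq T^{\oplus\binom{\omega+2n}{n}}$, the adjunction computation $\dim\operatorname{Hom}(S,F^{(n)}(T))=1$ using $\operatorname{End}(S)=\mathbb{\Lambda}$, and the composition-series count for part (3) are all correct. (One small imprecision: $T$ must be taken to be a constituent of the \emph{head} of $E^{(n)}(S)$, not an arbitrary composition factor, for $\operatorname{Hom}(E^{(n)}(S),T)\neq 0$ to hold at the outset; this costs nothing since a posteriori $E^{(n)}(S)$ is simple.)

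The genuine gap is the one you flag yourself but do not close: the claim that $F^{(n)}(T)$ has simple socle and simple head, isomorphic to each other, for $T$ simple with $ET=0$. The route you propose --- ``iterating the simple-socle-equals-head property along $F^n\simeq (F^{(n)})^{\oplus n!}$'' --- cannot work as stated, because the property recalled before the lemma applies only to $F$ of a \emph{simple} object, whereas $F(T),F^2(T),\dots$ are not simple, so there is nothing to iterate. Everything else in your argument is downstream of this fact, so the proof is circular-free only once it is supplied. This is precisely the nontrivial content of \cite[Prop.~5.20]{CR08}: there it is proved not by iteration but by reducing to the minimal (simple) $\mathfrak{sl}_2$-categorification, where $F^{(n)}(T)$ is identified with a module over a cyclotomic nil-Hecke algebra whose basic algebra is local, whence the simple head and socle. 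So your proposal is a correct derivation of the lemma \emph{from} that input, but it does not constitute an independent proof of the lemma; the hard step remains exactly the one the paper delegates to \cite{CR08}.
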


\subsection{Decomposition numbers}\label{subsec:decnumbers}
Let $\mathbb{O}$  be a complete discrete valuation ring, with residue field $\mathbb{k}$ of positive characteristic and fraction field $\mathbb{K}$ of characteristic zero.
Let $\{G_r\}_{r \in \mathbb{N}}$ be a family of finite groups. We consider the category 
$$\mathbb{\Lambda} \mathcal{G} =  \bigoplus_{r \geq 0} \mathbb{\Lambda}G_r \mathsf{-mod}$$
which is the sum of the categories of finitely generated representations of $G_r$ over $\mathbb{\Lambda}$, where $\mathbb{\Lambda}$ is any ring among $\mathbb{K},\mathbb{O},\mathbb{k}$.  If $\mathbb{k}$ and $\mathbb{K}$ are large enough for all the finite groups encountered, the following conditions will be satisfied:
\begin{itemize}
\item For $\mathbb{\Lambda} = \mathbb{K}, \mathbb{k}$, the category $\mathbb{\Lambda} \mathcal{G}$ has finite length and $\mathrm{End}(S) = \mathbb{\Lambda}$ for all $S \in \mathrm{Irr}_\mathbb{k} \mathcal{G}$.
\item Every $S \in \mathrm{Irr}_\mathbb{k} \mathcal{G}$ has a projective cover $P_S$ in $\mathbb{k} \mathcal{G}$, unique up to isomorphism. 
\item Every projective module $P$ in $\mathbb{k} \mathcal{G}$ lifts uniquely to a projective module $\widetilde P$ in $\mathbb{O} \mathcal{G}$. 
\item $\mathbb{K} \mathcal{G}$ is semisimple.
\end{itemize}
If $S \in \mathrm{Irr}_\mathbb{k} \mathcal{G}$ and $\Delta\in \mathrm{Irr}_\mathbb{K} \mathcal{G}$, the decomposition number of $S$ in $\Delta$ is the multiplicity of $\Delta$ as a direct summand (equivalently, a composition factor) of $\mathbb{K} \otimes_\mathbb{O} \widetilde{P_S}$. We denote it by
$$[P_S : \Delta]$$
in a way which will look familiar to the reader interested in highest weight categories.

\smallskip

Now, let $\mathcal{V}$ be a direct summand of $\mathbb{O}\mathcal{G}$. We assume that $(E,F,X,T)$ is a categorical datum on $\mathcal{V}$ inducing an $\mathfrak{sl}_2$-categorical action on $\mathbb{k} \mathcal{V}$. This is to ensure that the divided powers $E^{(n)}$ and $F^{(n)}$ defined in the previous section can be lifted to exact and biadjoint endofunctors of $\mathcal{V}$, even though $\mathcal{V}$ itself does not have an action of $\mathfrak{sl}_2$ since its Grothendieck group might be too big. 
The image of these functors by extension of scalars will be still denoted by $E^{(n)}$ and $F^{(n)}$.

\begin{prop}\label{prop:decnumbers}
In the previous setting, 
 let $S \in \mathrm{Irr}_\mathbb{k} \mathcal{V}$ and $n \geq 0$ be such that $E^{n+1}(S) = 0$ and $E^n(S) \neq 0$. Then 
 $$ [P_S : \Delta] = [P_{E^{(n)}S} : E^{(n)} \Delta]$$
 for all irreducible characters $\Delta \in \mathrm{Irr}_\mathbb{K} \mathcal{V}$ such that $E^{n+1} \Delta = 0$.
\end{prop}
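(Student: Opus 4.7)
The plan is to reduce the right-hand side $[P_T : E^{(n)}\Delta]$, with $T := E^{(n)}S$, to a sum of decomposition numbers $[P_{S'} : \Delta]$ via biadjointness of $(E^{(n)}, F^{(n)})$, and then to show that only the term $S'=S$ contributes. The calculation splits into two parts: computing, with the help of Lemma~\ref{lem:cat}, the multiplicities of those indecomposable projectives in $F^{(n)}\widetilde{P_T}$ whose head $S'$ satisfies $E^{n+1}S'=0$, and controlling the contribution of those with $E^{n+1}S'\neq 0$, which is where the hypothesis $E^{n+1}\Delta = 0$ enters.

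First, biadjointness of $E^{(n)}$ and $F^{(n)}$ together with semisimplicity of $\bbK\mathcal{V}$ yields
$$[P_T : E^{(n)}\Delta] \;=\; \dim\mathrm{Hom}_{\bbK\mathcal{V}}(\widetilde{P_T}\otimes\bbK,\, E^{(n)}\Delta) \;=\; [F^{(n)}\widetilde{P_T}\otimes\bbK : \Delta].$$
Since $F^{(n)}$ has exact biadjoint $E^{(n)}$, it preserves projectives, and by uniqueness of lifts $F^{(n)}\widetilde{P_T}$ decomposes in $\bbO\mathcal{V}$ as $\bigoplus_{S'}m_{S'}\widetilde{P_{S'}}$ with $m_{S'} = \dim\mathrm{Hom}_{\bbk\mathcal{V}}(F^{(n)}P_T, S') = \dim\mathrm{Hom}_{\bbk\mathcal{V}}(P_T, E^{(n)}S')$ by adjunction, i.e.\ the multiplicity of $T$ in the head of $E^{(n)}S'$. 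Setting $\varepsilon(S') := \max\{m\geq 0 : E^m S' \neq 0\}$, one has $m_{S'}=0$ when $\varepsilon(S')<n$. When $\varepsilon(S')=n$, Lemma~\ref{lem:cat}(1) makes $E^{(n)}S'$ simple, so $m_{S'}\neq 0$ iff $E^{(n)}S'\simeq T$; in that case a nonzero element of $\mathrm{Hom}(S', F^{(n)}T) = \mathrm{Hom}(E^{(n)}S', T)$ embeds $S'$ into the socle of $F^{(n)}E^{(n)}S$, which equals $S$ by Lemma~\ref{lem:cat}(2). Thus $m_S=1$ and $m_{S'}=0$ for every other $S'$ with $\varepsilon(S')\leq n$.

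The technical step, and the main obstacle, is to show that $[P_{S'}:\Delta]=0$ whenever $\varepsilon(S')>n$; everything else is formal. Fix an $\bbO$-lattice $\widetilde{\Delta}\subset\Delta$. Applying the exact $\bbO$-linear functor $E^{n+1}$ to the short exact sequence $0\to\widetilde{\Delta}\xrightarrow{\pi}\widetilde{\Delta}\to\widetilde{\Delta}\otimes\bbk\to 0$ (exact because $\widetilde{\Delta}$ is $\bbO$-free) shows that multiplication by $\pi$ on $E^{n+1}\widetilde{\Delta}$ is injective, so $E^{n+1}\widetilde{\Delta}$ is $\bbO$-free; its generic fibre $E^{n+1}\Delta$ vanishes by hypothesis, forcing $E^{n+1}\widetilde{\Delta}=0$ and consequently $E^{n+1}(\widetilde{\Delta}\otimes\bbk)=0$ in $\bbk\mathcal{V}$. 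A descending induction along a composition series of $\widetilde{\Delta}\otimes\bbk$, using exactness of $E^{n+1}$, then forces $E^{n+1}S''=0$ for every composition factor $S''$. Since Brauer reciprocity gives $[P_{S'}:\Delta]=[\widetilde{\Delta}\otimes\bbk:S']$, any $S'$ with $\varepsilon(S')>n$ must be absent from this reduction.

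Collecting everything,
$$[P_T : E^{(n)}\Delta]\;=\;\sum_{S'}m_{S'}\,[P_{S'}:\Delta]\;=\;m_S\,[P_S:\Delta]\;=\;[P_S:\Delta],$$
which concludes the proof.
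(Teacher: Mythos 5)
Your proof is correct, but it takes a genuinely different route from the paper's. The paper splits $F^{(n)}P_T = P_S \oplus Q$ as in \eqref{eq:Q} and controls the complement $Q$ by applying $E^{(n)}$ and invoking the Mackey-type decomposition \eqref{eq:Mackey} together with the multiplicity count of Lemma~\ref{lem:cat}(3): this shows $E^{(n)}Q$ is built out of modules $F^{(k)}E^{(k)}P_T$ with $k\geq 1$, so any $\Delta$ occurring in $Q$ satisfies $E^{n+k}\Delta\neq 0$. You instead decompose $F^{(n)}\widetilde{P_T}$ completely into indecomposable projectives, pin down the multiplicities $m_{S'}$ for all $S'$ with $\varepsilon(S')\le n$ using only adjunction and Lemma~\ref{lem:cat}(1)--(2) (one harmless slip: $\dim\mathrm{Hom}(P_T,E^{(n)}S')$ is the composition multiplicity $[E^{(n)}S':T]$, not the multiplicity in the head, but this does not matter since $E^{(n)}S'$ is simple in the only case where you use the value), and then dispose of the terms with $\varepsilon(S')>n$ by transporting the hypothesis $E^{n+1}\Delta=0$ to the composition factors of an $\ell$-reduction of $\Delta$ and invoking Brauer reciprocity. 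The trade-off is clear: your argument dispenses with \eqref{eq:Mackey} and with Lemma~\ref{lem:cat}(3) altogether, which is a genuine simplification of the functorial bookkeeping; in exchange it leans on Brauer reciprocity and a choice of $\mathbb{O}$-lattice in $\Delta$, whereas the paper's argument stays entirely on the projective side (which is why the remark that the result transfers to highest weight categories and standard objects goes through verbatim there --- though BGG reciprocity would serve your version equally well in that setting). Both proofs ultimately rest on the same two external inputs: the lifting of projectives to $\mathbb{O}$ and Lemma~\ref{lem:cat}.
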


\begin{proof} Let $\omega$ be the weight of the simple module $T = E^{(n)}S$. Note that $\omega\geq 0$ since the class of $T$ is a highest weight vector.
Since $S$ is the head of $F^{(n)} E^{(n)} S$   by Lemma \ref{lem:cat}(2)
we have that $F^{(n)} P_{T}$ contains $P_S$ as 
a direct summand. We write
\begin{equation}
 \label{eq:Q}
 F^{(n)} P_{E^{(n)}S} = P_S \oplus Q
\end{equation}
for some projective module $Q$. We want to have some control on the character of $Q$, so for that we compute
the image of \eqref{eq:Q} by $E^{(n)}$. Using \eqref{eq:Mackey} we have
$$ E^{(n)} P_S \oplus  E^{(n)} Q = E^{(n)} F^{(n)} P_{T} = P_{T}^{\oplus \binom{\omega}{n}} \oplus FE P_{T}^{\oplus \binom{\omega}{n-1}}\oplus\cdots \oplus F^{(n)} E^{(n)} P_{T}. $$
Now we claim that $E^{(n)} P_S$ contains $\binom{\omega}{n}$ copies of $P_T$. Indeed, the weight of $S$ equals 
$\omega - 2n$ and we have
$$\mathrm{Hom}(E^{(n)} P_S,T) = \mathrm{Hom}(E^{(n)} P_S,E^{(n)} S) \simeq  \mathrm{Hom}(P_S,F^{(n)} E^{(n)} S)$$
which has dimension $\binom{\omega}{n}$ by Lemma~\ref{lem:cat}.(3). This proves that $E^{(n)} Q$ is a direct summand of a sum of 
modules of the form $F^{(k)} E^{(k)}(P_T)$ where $k \geq 1$. By the lifting property of projective modules, the same holds over $\mathbb{O}$.

\smallskip

Now, let $\Delta$ be an irreducible character of $\mathbb{K}\mathcal{V}$ such that $[Q : \Delta] \neq 0$.
In other words, $\Delta$ is isomorphic to a submodule of $\mathbb{K} \otimes_\mathcal{O} \widetilde{Q}$, and since $E^{(n)}$
is exact, $E^{(n)} \Delta $ is isomorphic to  a submodule of $\mathbb{K} \otimes_\mathcal{O} E^{(n)}\widetilde{Q}$. Furthermore, it 
must be non-zero since by definition $\widetilde{Q}$ is a direct summand of $F^{(n)} \widetilde{P}_T$, putting $\Delta$ in the image of $F^{(n)}$. Now by the previous paragraph, there is $k \geq 1$ such that 
$[F^{(k)} E^{(k)}(P_T) : E^{(n)} \Delta] \neq 0$, which forces $E^{(k)} E^{(n)} \Delta$, and therefore $E^{n+k} \Delta$ to be non-zero.
We showed that $E^{n+1} \Delta = 0$ implies  $[Q : \Delta] = 0$, which gives the result by \eqref{eq:Q}. 
\end{proof}

\begin{remark} The same result (with analogous proof) would hold for highest weight categories and standard objects $\Delta$.
\end{remark}

\section{Unipotent representations of finite classical groups}

By a \emph{finite classical group} $G_n(q)$ we will always mean one of the following groups:
$$\begin{array}{l|cccc}
 \text{Group} & \mathrm{SO}_{2n+1}(q) &  \mathrm{Sp}_{2n}(q) & \mathrm{O}_{2n}^+(q) & \mathrm{O}_{2n}^-(q)\\[3pt]\hline
 \text{Type} \phantom{\Big)}& B_n & C_n & D_n & {}^2 D_n \end{array}$$
where $q > 1 $ is a power of some odd prime. The convention for small values of $n$ is that $ \mathrm{SO}_{1}(q) =\mathrm{Sp}_{0}(q) = 
 \mathrm{O}_{0}^{\pm}(q) = 1$ is the trivial group, $\mathrm{O}_2^{+}(q) = \mathrm{GL}_1(q) \rtimes \mathbb{Z}/2$ and $\mathrm{O}_2^{-}(q) = \mathrm{GU}_1(q) \rtimes \mathbb{Z}/2$ are $1$-dimensional tori (of respective orders $q-1$ and $q+1$) extended by $\mathbb{Z}/2$.
\smallskip

We will work with modular representations, therefore with different coefficient rings for our representations.
As in Section~\ref{subsec:decnumbers} we fix a complete discrete valuation ring $\mathbb{O}$ with residue field $\mathbb{k}$ and fraction field $\mathbb{K}$. We assume that $\mathbb{K}$ has characteristic zero, $\mathbb{k}$ has characteristic $\ell > 0$ and that they are both large enough for all the finite groups encountered. In particular, every irreducible representation over $\mathbb{K}$ or $\mathbb{k}$ will be absolutely irreducible.
We will always work under the assumption that $\ell$ is odd and does not divide $q$. The multiplicative order $d$ of $q$ in $\mathbb{k}^\times$ will be assumed to be even, so that we work in the \emph{unitary prime} case.

\smallskip

Note that some of the results contained in this section for the groups $\mathrm{O}_{2n}^{\pm}(q)$
have not been published yet. Still, we have decided to include these groups 
since the combinatorics is not much different from the groups of type $B/C$, 
and since there are work in progress by Li--Shan--Zhang \cite{LSZ23} and Cia-Luvecce \cite{CL} which will contain all the results we need
here.

\subsection{Combinatorics}\label{subsec:comb}
We recall here the combinatorics that is used to classify the unipotent characters and unipotent blocks of finite classical groups in the unitary prime case.

\subsubsection{Partitions and symbols}
Let $m$ be a non-negative integer. A partition $\lambda$ of $m$ is a non-increasing sequence of non-negative integers $\lambda = (\lambda_1 \geq \lambda_2
\geq \cdots \geq 0)$ which add up to $m$, the size $|\lambda|$ of $\lambda$. A bipartition $\blambda = \lambda^1.\lambda^2$ of $m$ is a pair of partitions
$(\lambda_1,\lambda_2)$ such that $|\lambda^1| + |\lambda^2| = m$. A charged bipartition $|\blambda,\bs\rangle$ is the data of a bipartition and a pair of integers $\bs = (s_1,s_2) \in
\mathbb{Z}^2$, called the charge. 

\smallskip

To a charged bipartition $|\blambda,\bs\rangle$ one can attach a charged symbol $\Theta(\blambda,\bs)$ corresponding to the pair of charged $\beta$-sets coming from the two partitions. More precisely, we have $\Theta(\blambda,\bs) = (X_1,X_2)$ where 
$$X_k = \{ s_k + \lambda_j -j+1 \, |\, j\geq 1\}.$$
A symbol will be represented by the corresponding $2$-abacus, with the first row $X_1$ on the bottom. The \emph{defect} of a charged symbol $\Theta= \Theta(\blambda,\bs)$ is $\mathsf{def}(\Theta) = s_2-s_1$.

\begin{example}\label{ex:symbol}
The charged symbol of charge $(-4,3)$ and bipartition $1^3.2^21$ will be represented as follows
\[
\TikZ{[scale=.5]
\draw
(-7,2)node[]{\hbox{row 2}}
(-7,1)node[]{\hbox{row 1}}
(9,2)node[fill,circle,inner sep=.5pt]{}
(9,1)node[fill,circle,inner sep=.5pt]{}
(8,2)node[fill,circle,inner sep=2pt]{}
(8,1)node[fill,circle,inner sep=.5pt]{}
(7,2)node[fill,circle,inner sep=2pt]{}
(7,1)node[fill,circle,inner sep=.5pt]{}
(6,2)node[fill,circle,inner sep=.5pt]{}
(6,1)node[fill,circle,inner sep=.5pt]{}
(5,2)node[fill,circle,inner sep=2pt]{}
(5,1)node[fill,circle,inner sep=.5pt]{}
(4,2)node[fill,circle,inner sep=.5pt]{}
(4,1)node[fill,circle,inner sep=.5pt]{}
(3,2)node[fill,circle,inner sep=2pt]{}
(3,1)node[fill,circle,inner sep=.5pt]{}
(2,2)node[fill,circle,inner sep=2pt]{}
(2,1)node[fill,circle,inner sep=.5pt]{}
(1,2)node[fill,circle,inner sep=2pt]{}
(1,1)node[fill,circle,inner sep=.5pt]{}
(0,2)node[fill,circle,inner sep=2pt]{}
(0,1)node[fill,circle,inner sep=2pt]{}
(-1,2)node[fill,circle,inner sep=2pt]{}
(-1,1)node[fill,circle,inner sep=2pt]{}
(-2,2)node[fill,circle,inner sep=2pt]{}
(-2,1)node[fill,circle,inner sep=2pt]{}
(-3,2)node[fill,circle,inner sep=2pt]{}
(-3,1)node[fill,circle,inner sep=.5pt]{}
(-4,2)node[fill,circle,inner sep=2pt]{}
(-4,1)node[fill,circle,inner sep=2pt]{}
(9,0)node[]{6}
(8,0)node[]{5} 
(7,0)node[]{4}
(6,0)node[]{3}
(5,0)node[]{2}
(4,0)node[]{1}
(3,0)node[]{0}
(2,0)node[]{-1} 
(1,0)node[]{-2}
(0,0)node[]{-3}
(-1,0)node[]{-4}
(-2,0)node[]{-5}
(-3,0)node[]{-6} 
(-4,0)node[]{-7}
;
}
\]
\end{example}

\subsubsection{Adding and removing boxes}
The unipotent characters will be parametrized by such symbols. In order to explain how the induction and restriction of unipotent representations behave on symbols, we will need the notion of addable/removable boxes. Recall that $d$ is a fixed even integer. Let $i \in \mathbb{Z}/d$ and $\Theta = (X_1,X_2)$ be a charged symbol. An addable $i$-box of $\Theta$ in the row $k \in \{1,2\}$ is an integer $x$ such that
\begin{itemize}
 \item $x \equiv i+ (k-1) \frac{d}{2}$ mod $d$;
 \item $x\in X_k$ and $x+1 \notin X_k$.
\end{itemize}
Adding the $i$-box $x$ in the symbol $\Theta$ consists in replacing $x$ by $x+1$ in $X_k$. 
\smallskip
A removable $i$-box of $\Theta$  in the row $k \in \{1,2\}$ is an integer $x$ such that 
\begin{itemize}
 \item $x \equiv i+ (k-1) \frac{d}{2}$ mod $d$;
 \item $x \notin X_k$ and $x+1 \in X_k$.
\end{itemize}
Removing the $i$-box $x$ in the symbol $\Theta$ consists in replacing $x+1$ by $x$ in $X_k$.  

\smallskip

There is a notion of \emph{good} addable/removable $i$-box, see for example \cite[Section 3]{JMMO}, \cite[Theorem 2.8]{FLOTW}. Given $i$, there is at most one \emph{good} addable/removable $i$-box in a charged symbol, and it can be used to describe the Kashiwara operators $\widetilde{f}_i$ and $\widetilde{e}_i$ on charged symbols (or charged bipartitions in other contexts). 

\begin{remark}
The reader might be surprised by the occurrence of $d/2$ in the definition of addable/removable boxes. This comes from the fact that the charges of our symbols will come from parameters $(q^{s_1},-q^{s_2})$ in the Hecke algebra. Using the fact that $-1 = q^{d/2}$, one should rather work with symbols of  charge $(s_1,s_2+d/2)$, but that will remove the symmetry from the combinatorics of $d/2$-co-hooks and $d/2$-co-cores. We have chosen to work with the original symbol parametrizing a characteristic $0$ unipotent character, while shifting the notion of $i$-boxes to account for its behavior in quantum characteristic $d$. This way we are consistent with the usual description of unipotent $\ell$-blocks. The discrepancy also appears in Section~\ref{subsec:cataction} where the charge on bipartitions and symbols differs by $(0,d/2)$.
\end{remark}

Adding and removing boxes does not change the charge. Removing all the possible boxes yields a symbol containing in each row only consecutive integers. Such a symbol corresponds to the empty bipartition, and the charge equals $(\mathrm{max}(X_1), \mathrm{max}(X_2))$ where $(X_1,X_2)$ is the $\beta$-set of the empty bipartition.

\begin{example}
Let us consider the symbol drawn in Example~\ref{ex:symbol}. Assume $d=8$. Then there are two addable $1$-boxes, the one in the top row being a good addable $1$-box. 
\[
\TikZ{[scale=.5]
\draw
(-7,2)node[]{\hbox{row 2}}
(-7,1)node[]{\hbox{row 1}}
(9,1)node[fill,circle,inner sep=.5pt]{}
(8,2)node[fill=gray,circle,inner sep=2pt]{}
(8,1)node[fill,circle,inner sep=.5pt]{}
(7,2)node[fill,circle,inner sep=2pt]{}
(7,1)node[fill,circle,inner sep=.5pt]{}
(6,2)node[fill,circle,inner sep=.5pt]{}
(6,1)node[fill,circle,inner sep=.5pt]{}
(5,2)node[fill,circle,inner sep=2pt]{}
(5,1)node[fill,circle,inner sep=.5pt]{}
(4,2)node[fill,circle,inner sep=.5pt]{}
(4,1)node[fill,circle,inner sep=.5pt]{}
(3,2)node[fill,circle,inner sep=2pt]{}
(3,1)node[fill,circle,inner sep=.5pt]{}
(2,2)node[fill,circle,inner sep=2pt]{}
(2,1)node[fill,circle,inner sep=.5pt]{}
(1,2)node[fill,circle,inner sep=2pt]{}
(1,1)node[fill,circle,inner sep=.5pt]{}
(0,2)node[fill,circle,inner sep=2pt]{}
(0,1)node[fill,circle,inner sep=2pt]{}
(-1,2)node[fill,circle,inner sep=2pt]{}
(-1,1)node[fill,circle,inner sep=2pt]{}
(-2,2)node[fill,circle,inner sep=2pt]{}
(-2,1)node[fill,circle,inner sep=2pt]{} 
(-3,2)node[fill,circle,inner sep=2pt]{}
%(-3,1)node[fill,circle,inner sep=.5pt]{}
(-4,2)node[fill,circle,inner sep=2pt]{}
(-4,1)node[fill=gray,circle,inner sep=2pt]{}
(9,0)node[]{6}
(8,0)node[]{5} % right region end
(7,0)node[]{4}
(6,0)node[]{3}
(5,0)node[]{2}
(4,0)node[]{1}
(3,0)node[]{0}%right region start
(2,0)node[]{-1} %left region end
(1,0)node[]{-2}
(0,0)node[]{-3}
(-1,0)node[]{-4}
(-2,0)node[]{-5}
(-3,0)node[]{-6} %left region start
(-4,0)node[]{-7};
\draw[fill=none, color=gray]
(9,2)circle(5pt)
(-3,1)circle(5pt);
 \draw [->] (8.15,2.15) to[bend left] (8.85,2.15);
  \draw [->] (-3.85,1.15) to[bend left] (-3.15,1.15);
}
\]
\end{example}

\subsubsection{Co-hooks and co-cores}
Let $e \geq 1$ be a positive integer. Given a charged symbol $\Theta = (X_1,X_2)$, a $e$-co-hook in row $k$ is a pair $(x,x-e)$ where $x \in X_k$ and $x-e \notin X_{k+1}$ (here $k+1$ must be understood modulo $2$). Removing the $e$-co-hook to $\Theta$ amounts to removing $x$ from $X_k$ and adding $x-e$ to  $X_{k+1}$ and swapping $X_1$ and $X_2$. If $\Theta$ has charge $(s_1,s_2)$ then removing a $e$-co-hook yields a symbol of charge $(s_2 \pm1, s_1 \mp1)$. A charged symbol with no $e$-co-hook is called a $e$-co-core.

\begin{example}
Let us again consider the symbol drawn in Example~\ref{ex:symbol}. There are four $4$-co-hooks, all being in the top row.
\[
\TikZ{[scale=.5]
\draw
(-7,2)node[]{\hbox{row 2}}
(-7,1)node[]{\hbox{row 1}}
(9,2)node[fill,circle,inner sep=.5pt]{}
(9,1)node[fill,circle,inner sep=.5pt]{}
(8,2)node[fill=gray,circle,inner sep=2pt]{}
(8,1)node[fill,circle,inner sep=.5pt]{}
(7,2)node[fill=gray,circle,inner sep=2pt]{}
(7,1)node[fill,circle,inner sep=.5pt]{}
(6,2)node[fill,circle,inner sep=.5pt]{}
(6,1)node[fill,circle,inner sep=.5pt]{}
(5,2)node[fill=gray,circle,inner sep=2pt]{}
(5,1)node[fill,circle,inner sep=.5pt]{}
(4,2)node[fill,circle,inner sep=.5pt]{}
(3,2)node[fill,circle,inner sep=2pt]{}
(2,2)node[fill,circle,inner sep=2pt]{}
(2,1)node[fill,circle,inner sep=.5pt]{}
(1,2)node[fill=gray,circle,inner sep=2pt]{}
(0,2)node[fill,circle,inner sep=2pt]{}
(0,1)node[fill,circle,inner sep=2pt]{}
(-1,2)node[fill,circle,inner sep=2pt]{}
(-1,1)node[fill,circle,inner sep=2pt]{}
(-2,2)node[fill,circle,inner sep=2pt]{}
(-2,1)node[fill,circle,inner sep=2pt]{} 
(-3,2)node[fill,circle,inner sep=2pt]{}
(-4,2)node[fill,circle,inner sep=2pt]{}
(-4,1)node[fill,circle,inner sep=2pt]{}
(9,0)node[]{6}
(8,0)node[]{5} 
(7,0)node[]{4}
(6,0)node[]{3}
(5,0)node[]{2}
(4,0)node[]{1}
(3,0)node[]{0}
(2,0)node[]{-1} 
(1,0)node[]{-2}
(0,0)node[]{-3}
(-1,0)node[]{-4}
(-2,0)node[]{-5}
(-3,0)node[]{-6} 
(-4,0)node[]{-7};
\draw[fill=none, color=gray]
(4,1)circle(5pt)
(3,1)circle(5pt)
(1,1)circle(5pt)
(-3,1)circle(5pt);
 \draw [->] (7.8,2) to (4.2,1);
 \draw [->] (6.8,2) to (3.2,1);
 \draw [->] (4.8,2) to (1.2,1);
 \draw [->] (0.8,2) to (-2.8,1);
}
\]
\end{example}

\subsection{Harish-Chandra series of unipotent characters}
We follow \cite{FoSr89} for the groups of type $B/C$ and \cite{W04} for the groups of type $D$.
We start by describing the cuspidal irreducible unipotent characters of $G_n(q)$.
\begin{itemize}
\item $\mathrm{SO}_{2n+1}(q)$ and $\mathrm{Sp}_{2n}(q)$ have a cuspidal unipotent 
character if and only if $n=t^2+t$ for some $t \in \mathbb{N}$. In that case it is unique, and we denote it by $\Delta_t$.
\item $\mathrm{O}_{2n}^{+}(q)$ has a cuspidal unipotent 
character if and only if $n=t^2$ for some $t \in 2\mathbb{N}$. If $t=0$, there is a unique one, denoted by $\Delta_0$. If $t \neq 0$, there are exactly two, which we denote by $\Delta_t$ and $\Delta_{-t}$. 
\item $\mathrm{O}_{2n}^{-}(q)$ has a cuspidal unipotent 
character if and only if $n=t^2$ for some $t \in 2\mathbb{N}+1$. In that case there are exactly two, which we denote
again by $\Delta_t$ and $\Delta_{-t}$. 
\end{itemize}
Note that $t^2+t$ is unchanged by the transformation $t \mapsto -t-1$, so writing $\Delta_t$ with $t \in \mathbb{Z}$ makes sense in all cases, with the convention that
$\Delta_{t} = \Delta_{-1-t}$ for groups of type $B/C$.

\smallskip

Let $n,m,t \geq 0$ be such that $G_n(q)$ has a cuspidal unipotent character $\Delta_t$. Then the unipotent characters of $G_{n+m}(q)$ above $\Delta_t$ are parametrized by bipartitions of $m$. 

\subsection{Classification of unipotent characters}\label{ssec:classification}
In order to have a global treatment of all the Harish-Chandra series, we define, for  
each $t \in \mathbb{Z}$, the following charge
$$\bsigma_t = \left\{ \begin{array}{ll}
 (t,-1-t) & \text{if $t$ is even and $G_n$ is of type $B/C$}, \\
 (-1-t,t) & \text{if $t$ is odd and $G_n$ is of type $B/C$}, \\
 (t,-t) & \text{if $t$ is even and $G_n$ is of type $D$}, \\
 (-t,t) & \text{if $t$ is odd and $G_n$ is of type ${}^2D$}. \\ \end{array}\right.$$
Then unipotent characters are classified by charged symbols of charged $\bsigma_t$, see Section~\ref{subsec:comb} for the definition or properties of symbols. In a series above $\Delta_t$, the symbols will have defect $2t+1$, $-2t-1$, $2t$, $-2t$ depending on the type of groups and the parity of $t$. 

\smallskip

Given a charged symbol $\Theta$ with charge $\bsigma_t$ ($t\in \mathbb{Z}$), we will denote by $\Delta_\Theta$ the corresponding unipotent character. Note that there are no unique such parametrization, but the one we will choose comes from a categorical action and it will satisfy the properties given in the following sections. 

\smallskip

\subsection{Unipotent blocks} Recall that the multiplicative order of $q$ in $\mathbb{k}^\times$, denoted by $d$, is assumed to be even.
A \emph{unipotent $\ell$-block} is an $\ell$-block containing at least one unipotent character. 
The partition of unipotent characters into $\ell$-blocks can be read off from the labelling of unipotent characters by symbols. By \cite{FoSr89,Sr08}, one can chose the parametrization such that two unipotent characters are in the same $\ell$-block if and only if the corresponding symbols have the same $d/2$-cocore. 

\subsection{Classification of unipotent simple modules over $\mathbb{k}$}
Under the assumptions on $q$ and $\ell$, the decomposition matrix of the unipotent blocks of classical groups have a unitriangular shape. This is proven for finite classical groups coming from connected reductive algebraic groups in \cite{BDT19}, and the case of $\mathrm{O}_{2n}^\pm(q)$ follows for example from \cite[Thm. 3.1]{FeSp23}. In particular, the parametrization of unipotent characters by charged symbols yields a parametrization of the irreducible unipotent representations over $\mathbb{k}$ as well. Given a charged symbol $\Theta$ with charge $\bsigma_t$ ($t\in \mathbb{Z}$), we will denote by $S_\Theta$ the corresponding simple representation.

\subsection{Categorical action}\label{subsec:cataction}
Let $G_n(q)$ be a finite classical group. We denote by $\mathbb{O}G_n(q)\mathsf{-umod}$ the category of finitely generated unipotent representations of $G_n(q)$ over $\mathbb{O}$. It is the direct summand of $\mathbb{O}G_n(q)\mathsf{-mod}$ corresponding to the sum of all unipotent blocks. The type of finite classical group being fixed, we denote by $\mathcal{V}$ the category
$$ \mathcal{V} := \bigoplus_{n \geq 0} \mathbb{O}G_n(q)\mathsf{-umod}.$$

Recall that we work in the \emph{unitary prime} case, where $d$, the multiplicative order of $q$ in $\mathbb{k}^\times$, is even. In that case, there is, for every $i \in \mathbb{Z}/d$, a categorical datum on $\mathcal{V}$ inducing an $\mathfrak{sl}_2$-categorical action on $\mathbb{k} \mathcal{V}$. We will denote by $E_i$ and $F_i$ the corresponding functors (over any ring of coefficient between $\mathbb{K}$,  $\mathbb{O}$ and $\mathbb{k}$). These functors are defined using Harish-Chandra induction and restriction functors, see \cite{DVV19,DVV17} for more details for their construction.

\smallskip

These various $\mathfrak{sl}_2$-categorical actions come from an $\widehat{\mathfrak{sl}}_d$-categorical action. We shall not use that fact since we will be working 
with each action separately. However, it is important to know that one can chose the parametrisation of unipotent characters in such a way that one can 
actually compute the action of each $E_i$, $F_i$ on irreducible unipotent characters and unipotent Brauer characters. More precisely, if we define
\begin{equation}
\label{eq:charge-s}
\bs_t = \bsigma_t + (0,d/2) 
\end{equation}
then there exists an isomorphism of $\widehat{\mathfrak{sl}}_d$-modules
\begin{equation}
\label{eq:fock}
 \begin{array}{rcl} 
\mathbb{C} \otimes_\mathbb{Z} K_0(\bbk \mathcal{V})   &   \mathop{\longrightarrow}\limits^\sim
 & \displaystyle\bigoplus_{t }\mathsf{F}(\bs_t)  \\[6pt] \big[\Delta_{\Theta(\blambda,\bsigma_t)}\big]  & \longmapsto & |\blambda,\bs_t\rangle
 \end{array}
 \end{equation}
where $\mathsf{F}(\bs_t)$ is the level 2 Fock space of charge $\bs_t$, and  $\big[\Delta\big] $ denotes the class of any $\ell$-reduction of the character $\Delta $. Note that depending on the type of the classical group $G_n$, the integer $t$ will run over the non-negative integers only (for type $B/C$), over all the even integers (for type $D$) or over all the odd integers (for type ${}^2 D$).

\smallskip

Note that the action of $\widehat{\mathfrak{sl}}_d$ on the Fock space $\mathsf{F}(\bs_t)$ depends only on the charge up to shifts by $(0,d)$ and $(d,0)$.

However, $\bs_t\in\bbZ^2$ is chosen so that the map
$$S_{\Theta(\blambda,\bsigma_t)} \longmapsto |\blambda,\bs_t\rangle  $$
induces an isomorphism between the crystals (which are sensitive to the charge, not only the residue class of the components of the charge). This is proven for type $B/C$ in \cite[Thm. 1.7]{DN}, and the same proof should work for type $D$ and ${}^2D$ once we have a good parametrization.

\subsection{Explicit formulas}
The categorification results contained in the previous section were proven to get an explicit description of the Harish-Chandra induction and restriction functors on the unipotent representations in both characteristic zero and $\ell$. Given a charged symbol $\Theta$ with charge $\bsigma_t$, we have 
\begin{equation}
\label{eq:action-on-delta} 
E_i\big(\Delta_\Theta \big) = \bigoplus_{\begin{subarray}{c} j \equiv i \text{ mod } d \\ \Theta \smallsetminus \Psi=j \end{subarray}} \Delta_\Psi
\end{equation}
so that $E_i$ removes all the possible removable $i$-boxes from the charged symbol $\Theta$.
\begin{equation}
\label{eq:action-on-simple} 
\mathrm{soc}\, E_i\big(S_{\Theta} \big) = \mathrm{hd}\, E_i\big(S_{\Theta}\big) = \left\{ \begin{array}{ll} 0 & \text{$\Theta$ has no good removable $i$-box},\\
S_{\Psi } & \text{if $\Psi$ is obtained from $\Theta$ by removing the good $i$-box}. \\ \end{array} \right.
\end{equation}
Similar formulas are obtained for the action of $F_i$ by adjunction.

%%%%%%%%%%%%%%%%%%%%%%%%%%%%%%%%%%%%%%%%%%%%%%%%%

\section{The diagrammatics of $d$-small symbols}\label{sec:diagrammatics} 

In this section, we adapt the combinatorics in \cite{Bru,BS1,BS3} to an appropriate class of symbols labeling unipotent characters of types $B$, $C$, $D$ and ${}^2D$. This will position us to prove the formula for decomposition numbers in Theorem \ref{thm:main} using Proposition \ref{prop:decnumbers}.

\subsection{$d$-small symbols}\label{ssec:dsmall}

Given a symbol $\Theta$, we work with its graphical representation as two rows of beads and spaces, and so we will often refer colloquially to the elements $\beta\in\Theta$ as \emph{beads} and to the elements of $(\bbZ,\bbZ)\setminus\Theta$ as \emph{spaces}.

\begin{definition}
Let $\Theta=(X_1,X_2)$ be a symbol.
For each $i=1,2$, define the interval $J_i=[c_i,d_i]\subset \bbZ$ by $$d_i=\max\{\beta\in\bbZ\;\mid\; \beta\in X_i \}$$ and $$c_i=\max\{z\in\bbZ\;\mid\; \beta\in X_i \hbox{ for all }\beta<z\}.$$
Fix $d\in2\bbN$. We say $\Theta$ is {\em $d$-small} if there exist intervals $I_1:=[a_1,b_1],I_2:=[a_2,b_2]\subset\mathbb{Z}$ satisfying the following conditions:
\begin{itemize}
\item  $b_i-a_i=\frac{d}{2}-1$ for $i=1,2$,
\item $J_i\subseteq I_i$ for $i=1,2$,
\item $b_2\equiv b_1+\frac{d}{2} \mod d$.
\end{itemize}
\end{definition}

Given a $d$-small symbol $\Theta$, we define the {\em right region} to be the region of $\Theta$ containing beads in the larger interval of integers $I_j$ , and the {\em left region} to be the region of $\Theta$ containing beads in the smaller subset of integers $I_{j+1}$ (taking the subscript mod $2$). We define the {\em middle region} to region of the symbol between the left and right regions. We note that the length of the middle region is always a multiple of $d$. The middle region contains only beads in the row containing the beads of the right region, and only spaces in the row containing the spaces of the left region. The left region always contains only beads in one row (the same row as the beads of the middle region) while the right region always contains only spaces in the opposite row. By abuse of language, we will often refer to the intervals $I_j$ and $I_{j+1}$ of $\bbZ$ as the right and left regions, and the interval of integers between them as the middle region.

\begin{example}\label{dsmall exl} 
The following symbol is $d$-small for $d=12$:
\[
\TikZ{[scale=.5]
\draw
(-7,2)node[]{\hbox{row 2}}
(-7,1)node[]{\hbox{row 1}}
(9,2)node[fill,circle,inner sep=.5pt]{}
(9,1)node[fill,circle,inner sep=.5pt]{}
(8,2)node[fill,circle,inner sep=2pt]{}
(8,1)node[fill,circle,inner sep=.5pt]{}
(7,2)node[fill,circle,inner sep=2pt]{}
(7,1)node[fill,circle,inner sep=.5pt]{}
(6,2)node[fill,circle,inner sep=.5pt]{}
(6,1)node[fill,circle,inner sep=.5pt]{}
(5,2)node[fill,circle,inner sep=2pt]{}
(5,1)node[fill,circle,inner sep=.5pt]{}
(4,2)node[fill,circle,inner sep=.5pt]{}
(4,1)node[fill,circle,inner sep=.5pt]{}
(3,2)node[fill,circle,inner sep=2pt]{}
(3,1)node[fill,circle,inner sep=.5pt]{}
(2,2)node[fill,circle,inner sep=2pt]{}
(2,1)node[fill,circle,inner sep=.5pt]{}
(1,2)node[fill,circle,inner sep=2pt]{}
(1,1)node[fill,circle,inner sep=.5pt]{}
(0,2)node[fill,circle,inner sep=2pt]{}
(0,1)node[fill,circle,inner sep=2pt]{}
(-1,2)node[fill,circle,inner sep=2pt]{}
(-1,1)node[fill,circle,inner sep=2pt]{}
(-2,2)node[fill,circle,inner sep=2pt]{}
(-2,1)node[fill,circle,inner sep=2pt]{}
(-3,2)node[fill,circle,inner sep=2pt]{}
(-3,1)node[fill,circle,inner sep=.5pt]{}
(-4,2)node[fill,circle,inner sep=2pt]{}
(-4,1)node[fill,circle,inner sep=2pt]{}
(9,0)node[]{6}
(8,0)node[]{5} 
(7,0)node[]{4}
(6,0)node[]{3}
(5,0)node[]{2}
(4,0)node[]{1}
(3,0)node[]{0}
(2,0)node[]{-1} 
(1,0)node[]{-2}
(0,0)node[]{-3}
(-1,0)node[]{-4}
(-2,0)node[]{-5}
(-3,0)node[]{-6} 
(-4,0)node[]{-7}
(8.5,0.5)node{} to (8.5,2.5)node{}
(2.5,0.5)node{} to (2.5,2.5)node{}
(-3.5,0.5)node{} to (-3.5,2.5)node{}
;
}
\]
The left region must be chosen to be $[-6,-1]$ and the right region to be $[0,5]$, and we have marked these regions with the vertical lines in the abacus diagram. The middle region is empty in this example.
\end{example}

\begin{example}\label{dsmall exl2}
We take $d=10$ and consider 
the following symbol. We see that it is $d$-small, with right region $[5,9]$ and left region $[-10,-6]$. The middle region has length $d$ in this case.
\[
\TikZ{[scale=.5]
\draw
(-14,2)node[]{\hbox{row 2}}
(-14,1)node[]{\hbox{row 1}}
(10,2)node[fill,circle,inner sep=.5pt]{}
(10,1)node[fill,circle,inner sep=.5pt]{}
(9,2)node[fill,circle,inner sep=.5pt]{}
(9,1)node[fill,circle,inner sep=.5pt]{}
(8,2)node[fill,circle,inner sep=2pt]{}
(8,1)node[fill,circle,inner sep=.5pt]{}
(7,2)node[fill,circle,inner sep=2pt]{}
(7,1)node[fill,circle,inner sep=.5pt]{}
(6,2)node[fill,circle,inner sep=.5pt]{}
(6,1)node[fill,circle,inner sep=.5pt]{}
(5,2)node[fill,circle,inner sep=2pt]{}
(5,1)node[fill,circle,inner sep=.5pt]{}
(4,2)node[fill,circle,inner sep=2pt]{}
(4,1)node[fill,circle,inner sep=.5pt]{}
(3,2)node[fill,circle,inner sep=2pt]{}
(3,1)node[fill,circle,inner sep=.5pt]{}
(2,2)node[fill,circle,inner sep=2pt]{}
(2,1)node[fill,circle,inner sep=.5pt]{}
(1,2)node[fill,circle,inner sep=2pt]{}
(1,1)node[fill,circle,inner sep=.5pt]{}
(0,2)node[fill,circle,inner sep=2pt]{}
(0,1)node[fill,circle,inner sep=.5pt]{}
(-1,2)node[fill,circle,inner sep=2pt]{}
(-1,1)node[fill,circle,inner sep=.5pt]{}
(-2,2)node[fill,circle,inner sep=2pt]{}
(-2,1)node[fill,circle,inner sep=.5pt]{}
(-3,2)node[fill,circle,inner sep=2pt]{}
(-3,1)node[fill,circle,inner sep=.5pt]{} 
(-4,2)node[fill,circle,inner sep=2pt]{}
(-4,1)node[fill,circle,inner sep=.5pt]{}
(-5,2)node[fill,circle,inner sep=2pt]{}
(-5,1)node[fill,circle,inner sep=.5pt]{}
(-6,2)node[fill,circle,inner sep=2pt]{}
(-6,1)node[fill,circle,inner sep=2pt]{}
(-7,2)node[fill,circle,inner sep=2pt]{}
(-7,1)node[fill,circle,inner sep=.5pt]{}
(-8,2)node[fill,circle,inner sep=2pt]{}
(-8,1)node[fill,circle,inner sep=2pt]{}
(-9,2)node[fill,circle,inner sep=2pt]{}
(-9,1)node[fill,circle,inner sep=2pt]{}
(-10,2)node[fill,circle,inner sep=2pt]{}
(-10,1)node[fill,circle,inner sep=.5pt]{}
(-11,2)node[fill,circle,inner sep=2pt]{}
(-11,1)node[fill,circle,inner sep=2pt]{}

(9,0)node[]{9} 
(4,0)node[]{4}
(-1,0)node[]{-1}
(-6,0)node[]{-6} 
(-11,0)node[]{-11} 
(9.5,0.5)node{} to (9.5,2.5)node{}
(4.5,0.5)node{} to (4.5,2.5)node{}
(-5.5,0.5)node{} to (-5.5,2.5)node{}
(-10.5,0.5)node{} to (-10.5,2.5)node{}
;
}
\]

\end{example}

\begin{example}\label{exl:regionchoice} Consider 
the following symbol. 
\[
\TikZ{[scale=.5]
\draw
(-13,2)node[]{\hbox{row 2}}
(-13,1)node[]{\hbox{row 1}}
(10,2)node[fill,circle,inner sep=.5pt]{}
(10,1)node[fill,circle,inner sep=.5pt]{}
(9,2)node[fill,circle,inner sep=.5pt]{}
(9,1)node[fill,circle,inner sep=.5pt]{}
(8,2)node[fill,circle,inner sep=.5pt]{}
(8,1)node[fill,circle,inner sep=.5pt]{}
(7,2)node[fill,circle,inner sep=.5pt]{}
(7,1)node[fill,circle,inner sep=.5pt]{}
(6,2)node[fill,circle,inner sep=.5pt]{}
(6,1)node[fill,circle,inner sep=.5pt]{}
(5,2)node[fill,circle,inner sep=.5pt]{}
(5,1)node[fill,circle,inner sep=2pt]{}
(4,2)node[fill,circle,inner sep=.5pt]{}
(4,1)node[fill,circle,inner sep=.5pt]{}
(3,2)node[fill,circle,inner sep=.5pt]{}
(3,1)node[fill,circle,inner sep=2pt]{}
(2,2)node[fill,circle,inner sep=.5pt]{}
(2,1)node[fill,circle,inner sep=2pt]{}
(1,2)node[fill,circle,inner sep=.5pt]{}
(1,1)node[fill,circle,inner sep=2pt]{}
(0,2)node[fill,circle,inner sep=.5pt]{}
(0,1)node[fill,circle,inner sep=2pt]{}
(-1,2)node[fill,circle,inner sep=.5pt]{}
(-1,1)node[fill,circle,inner sep=2pt]{}
(-2,2)node[fill,circle,inner sep=2pt]{}
(-2,1)node[fill,circle,inner sep=2pt]{}
(-3,2)node[fill,circle,inner sep=.5pt]{}
(-3,1)node[fill,circle,inner sep=2pt]{} 
(-4,2)node[fill,circle,inner sep=2pt]{}
(-4,1)node[fill,circle,inner sep=2pt]{}
(-5,2)node[fill,circle,inner sep=.5pt]{}
(-5,1)node[fill,circle,inner sep=2pt]{}
(-6,2)node[fill,circle,inner sep=.5pt]{}
(-6,1)node[fill,circle,inner sep=2pt]{}
(-7,2)node[fill,circle,inner sep=2pt]{}
(-7,1)node[fill,circle,inner sep=2pt]{}
(-8,2)node[fill,circle,inner sep=2pt]{}
(-8,1)node[fill,circle,inner sep=2pt]{}
(-9,2)node[fill,circle,inner sep=2pt]{}
(-9,1)node[fill,circle,inner sep=2pt]{}
(-10,2)node[fill,circle,inner sep=2pt]{}
(-10,1)node[fill,circle,inner sep=2pt]{}
(10,0)node[]{10} 
(9,0)node[]{9} 
(8,0)node[]{8} 
(7,0)node[]{7} 
(6,0)node[]{6} 
(5,0)node[]{5} 
(4,0)node[]{4}
(3,0)node[]{3}
(2,0)node[]{2}  
(1,0)node[]{1} 
(0,0)node[]{0} 
(-1,0)node[]{-1}
(-2,0)node[]{-2}
(-3,0)node[]{-3}
(-4,0)node[]{-4}   
(-5,0)node[]{-5} 
(-6,0)node[]{-6}
(-7,0)node[]{-7}
(-8,0)node[]{-8}
(-9,0)node[]{-9}
(-10,0)node[]{-10}     
;
}
\]
Take $d=16$. Then this symbol is $d$-small with four different possible choices for the left and right regions. 
They are: 
\begin{enumerate}
\item $\hbox{left region}=[-9,-2]$, $\hbox{right region}=[-1,6]$,
\item $\hbox{left region}=[-8,-1]$, $\hbox{right region}=[0,7]$,
\item $\hbox{left region}=[-7,0]$, $\hbox{right region}=[1,8]$,
\item $\hbox{left region}=[-6,1]$, $\hbox{right region}=[2,9]$.
\end{enumerate}
\end{example}

\begin{remark}\label{rmk:dsmalloncontent}
Recall that if $\Theta = \Theta(\blambda,\bsigma)$ is the symbol attached to the charged bipartition $|\blambda,\bsigma\rangle$, then $|\blambda,(\sigma_1,\sigma_2+\frac{d}{2})\rangle=:|\blambda,\bs\rangle$ is the charged bipartition for the Fock space as in (\ref{eq:fock}). Then $\Theta$ is $d$-small if and only if the set of residues of the charged contents of boxes in $|\blambda,\bs\rangle$ is  contained in a closed interval of length $\frac{d}{2}-1$ modulo $d$.
\end{remark}

\subsection{Diagrams}

\begin{definition}\label{symb2BS} Fix $d\in 2\bbN$. 
Let $\Theta$ be a $d$-small symbol. Write the right region as \linebreak $I_j=m+[1,\frac{d}{2}]\subset \bbZ$ for some $m\in\bbZ$. Let $kd$ be the length of the middle region, $k\in\bbN$. To $\Theta$ we associate an {\em up-down diagram} $\updown(\Theta)=w_1w_2\cdots w_{\frac{d}{2}}$ 
consisting of a word of length $\frac{d}{2}$ in the alphabet $\{\up,\down,\circ,\times\}$. Each $\beta_i=m+i\in I_j$ determines the $i$'th letter $w_i$ of $\updown(\Theta)$ according to the rule:
\begin{itemize}
\item if $\beta_i\in \Theta$ and $ \beta_i-kd-\frac{d}{2}$ occurs in both rows of $\Theta$, then $w_i=\times$,
\item  if $\beta_i\in \Theta$ and $ \beta_i-kd-\frac{d}{2}$ occurs only once in $\Theta$, then $w_i=\up$,
\item  if $\beta_i\notin \Theta$ and $\beta_i-kd-\frac{d}{2}$ occurs in both rows of $\Theta$, then $w_i=\down$,
\item if $\beta_i\notin \Theta$ and $ \beta-kd-\frac{d}{2}$ occurs only once in $\Theta$, then $w_i=\circ$.
\end{itemize}
\end{definition}

Thus, the up-down diagram of $\Theta$ records what happens in the left and right regions of $\Theta$, in the opposite rows where a mix of beads and spaces is possible. Putting the right region on top of the left region, the rules corresponds to the following possibilities:
\begin{equation}\label{eq:updown}
\TikZ{[scale=.5]
\draw
(0,2)node[fill,circle,inner sep=2pt]{}
(0,1)node[fill,circle,inner sep=2pt]{}
(1,2)node[fill,circle,inner sep=2pt]{}
(1,1)node[fill,circle,inner sep=.5pt]{}
(2,2)node[fill,circle,inner sep=.5pt]{}
(2,1)node[fill,circle,inner sep=2pt]{}
(3,2)node[fill,circle,inner sep=.5pt]{}
(3,1)node[fill,circle,inner sep=.5pt]{}
;
\node at (0,0)[]{$\times$};
\node at (1,0)[]{$\up$};
\node at (2,0)[]{$\down$};
\node at (3,0)[]{$\circ$};
\node at (-4.2,1)[]{Left region};
\node at (-4,2)[]{Right region};
}
\end{equation}

\begin{definition}
Given a $d$-small symbol $\Theta$, we further associate to it a {\em cup diagram} $\cupdown(\Theta)$. 
It is uniquely determined from $\updown(\Theta)$ and consists of non-crossing counterclockwise arcs and rays attached to the $\down$'s and $\up$'s of $\updown(\Theta)$ by the following recursive procedure. Start with a pair $\down\cdots \up$ which are either adjacent (i.e. $w_iw_{i+1}=\down\up$ for some $i$), or have only $\times$ and $\circ$ symbols between them.  Connect the $\down$ to the $\up$ with a curved arc (a ``cup"). Considering the $\down$ and $\up$ symbols as directional, the cup is counterclockwise-oriented as its left endpoint is $\down$ and its right endpoint is $\up$. Next, continue to connect with cups any $\down\cdots\up$ pairs which are adjacent, or have only $\times$ and $\circ$ symbols and previously constructed cups between them. When no more counterclockwise cups can be constructed by this rule, attach vertical rays below the remaining $\up$ and $\down$ symbols.
\end{definition}

\begin{example}\label{dsmall exl ctd}
We continue with Example \ref{dsmall exl}. We have $\updown(\Theta)=\up\down\times\down\up\;\up$ and the associated cup diagram $\cupdown(\Theta)$ is:
\[
\begin{tikzpicture}[scale=.6]
\node at (-2,0)[]{$\cupdown(\Theta)\;=$};
\node at (0,0)[]{$\up$};
\node at (1,0)[]{$\down$};
\node at (2,0)[]{$\times$};
\node at (3,0)[]{$\down$};
\node at (4,0)[]{$\up$};
\node at (5,0)[]{$\up$};
\draw
(3,-.1) arc(-180:0:0.5)
(1,-.1) arc(-180:0:2)
(5,-.15) node{} to (5,0.1)node{}
(4,-.15) node{} to (4,0.1)node{}
(0,.1)node{} to (0,-2)node{}
;
\end{tikzpicture}
\]
\end{example}

\begin{example}\label{exl:d28}
Let $d=28$ and consider the following $d$-small symbol:
\[
\TikZ{[scale=.5]
\draw
(15,2)node[fill,circle,inner sep=.5pt]{}
(15,1)node[fill,circle,inner sep=.5pt]{}
(14,2)node[fill,circle,inner sep=.5pt]{}
(14,1)node[fill,circle,inner sep=.5pt]{}
(13,2)node[fill,circle,inner sep=2pt]{}
(13,1)node[fill,circle,inner sep=.5pt]{}
(12,2)node[fill,circle,inner sep=.5pt]{}
(12,1)node[fill,circle,inner sep=.5pt]{}
(11,2)node[fill,circle,inner sep=.5pt]{}
(11,1)node[fill,circle,inner sep=.5pt]{}
(10,2)node[fill,circle,inner sep=2pt]{}
(10,1)node[fill,circle,inner sep=.5pt]{}
(9,2)node[fill,circle,inner sep=2pt]{}
(9,1)node[fill,circle,inner sep=.5pt]{}
(8,2)node[fill,circle,inner sep=2pt]{}
(8,1)node[fill,circle,inner sep=.5pt]{}
(7,2)node[fill,circle,inner sep=.5pt]{}
(7,1)node[fill,circle,inner sep=.5pt]{}
(6,2)node[fill,circle,inner sep=2pt]{}
(6,1)node[fill,circle,inner sep=.5pt]{}
(5,2)node[fill,circle,inner sep=.5pt]{}
(5,1)node[fill,circle,inner sep=.5pt]{}
(4,2)node[fill,circle,inner sep=.5pt]{}
(4,1)node[fill,circle,inner sep=.5pt]{}
(3,2)node[fill,circle,inner sep=.5pt]{}
(3,1)node[fill,circle,inner sep=.5pt]{}
(2,2)node[fill,circle,inner sep=2pt]{}
(2,1)node[fill,circle,inner sep=.5pt]{}
(1,2)node[fill,circle,inner sep=2pt]{}
(1,1)node[fill,circle,inner sep=.5pt]{}
(0,2)node[fill,circle,inner sep=2pt]{}
(0,1)node[fill,circle,inner sep=.5pt]{}
(-1,2)node[fill,circle,inner sep=2pt]{}
(-1,1)node[fill,circle,inner sep=.5pt]{}
(-2,2)node[fill,circle,inner sep=2pt]{}
(-2,1)node[fill,circle,inner sep=2pt]{}
(-3,2)node[fill,circle,inner sep=2pt]{}
(-3,1)node[fill,circle,inner sep=2pt]{}
(-4,2)node[fill,circle,inner sep=2pt]{}
(-4,1)node[fill,circle,inner sep=2pt]{}
(-5,2)node[fill,circle,inner sep=2pt]{}
(-5,1)node[fill,circle,inner sep=.5pt]{}
(-6,2)node[fill,circle,inner sep=2pt]{}
(-6,1)node[fill,circle,inner sep=.5pt]{}
(-7,2)node[fill,circle,inner sep=2pt]{}
(-7,1)node[fill,circle,inner sep=2pt]{}
(-8,2)node[fill,circle,inner sep=2pt]{}
(-8,1)node[fill,circle,inner sep=.5pt]{}
(-9,2)node[fill,circle,inner sep=2pt]{}
(-9,1)node[fill,circle,inner sep=.5pt]{}
(-10,2)node[fill,circle,inner sep=2pt]{}
(-10,1)node[fill,circle,inner sep=2pt]{}
(-11,2)node[fill,circle,inner sep=2pt]{}
(-11,1)node[fill,circle,inner sep=2pt]{}
(-12,2)node[fill,circle,inner sep=2pt]{}
(-12,1)node[fill,circle,inner sep=.5pt]{}
(-13,2)node[fill,circle,inner sep=2pt]{}
(-13,1)node[fill,circle,inner sep=.5pt]{}
(-14,2)node[fill,circle,inner sep=2pt]{}
(-14,1)node[fill,circle,inner sep=2pt]{}
(15,0)node[]{15}
(14,0)node[]{14}
(13,0)node[]{13}
(12,0)node[]{12}
(11,0)node[]{11}
(10,0)node[]{10}
(9,0)node[]{9}
(8,0)node[]{8}
(7,0)node[]{7}
(6,0)node[]{6}
(5,0)node[]{5} 
(4,0)node[]{4}
(3,0)node[]{3}
(2,0)node[]{2}
(1,0)node[]{1}
(0,0)node[]{0}
(-1,0)node[]{-1} 
(-2,0)node[]{-2}
(-3,0)node[]{-3}
(-4,0)node[]{-4}
(-5,0)node[]{-5}
(-6,0)node[]{-6} 
(-7,0)node[]{-7}
(-8,0)node[]{-8}
(-9,0)node[]{-9}
(-10,0)node[]{-10}
(-11,0)node[]{-11}
(-12,0)node[]{-12}
(-13,0)node[]{-13}
(-14,0)node[]{-14}
(14.5,0.5)node{} to (14.5,2.5)node{}
(0.5,0.5)node{} to (0.5,2.5)node{}
(-13.5,0.5)node{} to (-13.5,2.5)node{}
;
}
\]

Putting the right region on top of the left region we have
\[
\TikZ{[scale=.5]
\draw
(14,2)node[fill,circle,inner sep=.5pt]{}
(14,1)node[fill,circle,inner sep=.5pt]{}
(13,2)node[fill,circle,inner sep=2pt]{}
(13,1)node[fill,circle,inner sep=.5pt]{}
(12,2)node[fill,circle,inner sep=.5pt]{}
(12,1)node[fill,circle,inner sep=2pt]{}
(11,2)node[fill,circle,inner sep=.5pt]{}
(11,1)node[fill,circle,inner sep=2pt]{}
(10,2)node[fill,circle,inner sep=2pt]{}
(10,1)node[fill,circle,inner sep=2pt]{}
(9,2)node[fill,circle,inner sep=2pt]{}
(9,1)node[fill,circle,inner sep=.5pt]{}
(8,2)node[fill,circle,inner sep=2pt]{}
(8,1)node[fill,circle,inner sep=.5pt]{}
(7,2)node[fill,circle,inner sep=.5pt]{}
(7,1)node[fill,circle,inner sep=2pt]{}
(6,2)node[fill,circle,inner sep=2pt]{}
(6,1)node[fill,circle,inner sep=.5pt]{}
(5,2)node[fill,circle,inner sep=.5pt]{}
(5,1)node[fill,circle,inner sep=.5pt]{}
(4,2)node[fill,circle,inner sep=.5pt]{}
(4,1)node[fill,circle,inner sep=2pt]{}
(3,2)node[fill,circle,inner sep=.5pt]{}
(3,1)node[fill,circle,inner sep=2pt]{}
(2,2)node[fill,circle,inner sep=2pt]{}
(2,1)node[fill,circle,inner sep=.5pt]{}
(1,2)node[fill,circle,inner sep=2pt]{}
(1,1)node[fill,circle,inner sep=.5pt]{}
;
}
\]
hence according to \eqref{eq:updown} we get the following up-down diagram: 
$$\updown(\Theta)=\up\up\down\down\circ\up\down\up\up\times\down\down\up\;\circ$$ and 
\[
\begin{tikzpicture}[scale=.6]
\node at (-1,0)[]{$\cupdown(\Theta)\;=$};
\node at (1,0)[]{$\up$};
\node at (2,0)[]{$\up$};
\node at (3,0)[]{$\down$};
\node at (4,0)[]{$\down$};
\node at (5,0)[]{$\circ$};
\node at (6,0)[]{$\up$};
\node at (7,0)[]{$\down$};
\node at (8,0)[]{$\up$};
\node at (9,0)[]{$\up$};
\node at (10,0)[]{$\times$};
\node at (11,0)[]{$\down$};
\node at (12,0)[]{$\down$};
\node at (13,0)[]{$\up$};
\node at (14,0)[]{$\circ$};
\draw
(1,0.1)node{} to (1,-3)node{}
(2,0.1)node{} to (2,-3)node{}
(4,-.1) arc(-180:0:1)
(6,-.15) node{} to (6,0.1)node{}
(7,-.1) arc(-180:0:0.5)
(8,-.15) node{} to (8,0.1)node{}
(12,-.1) arc(-180:0:0.5)
(13,-.15) node{} to (13,0.1)node{}
(3,-.1) arc(-180:0:3)
(9,-.15) node{} to (9,0.1)node{}
(11,-0.1)node{} to (11,-3)node{}
;
\end{tikzpicture}
\]

\end{example}
We remark that the rays will not play any role in computing decomposition numbers; what is important for our purposes are the cups in $\cupdown(\Theta)$. Likewise the $\times$ and $\circ$ symbols play a placeholder role. In the situation of Example \ref{exl:regionchoice} where there is more than one choice for the left and right regions, different choices are reflected in $\updown(\Theta)$ by replacing a prefix of $\times$'s with a suffix of the same number of $\circ$'s or vice versa.

\subsection{The cocore of a $d$-small symbol}

\begin{lemma}\label{lem:cocore} Let $\Theta$ be a $d$-small symbol and let $\Theta^\circ$ be its cocore. Then $\Theta^\circ$ is a $d$-small symbol, its middle region has length $0$, and $\updown(\Theta^\circ)$ is obtained from $\updown(\Theta)$ by replacing all $\up$ by $\down$.
\end{lemma}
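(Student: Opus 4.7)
My plan is to identify the cocore $\Theta^\circ$ indirectly by induction on the complexity of $\Theta$, showing both that $\Theta^\circ$ is $d$-small with empty middle region and that its up-down diagram equals $w^\circ$, the string obtained from $\updown(\Theta)$ by the substitution $\up\mapsto\down$.

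Write $j,j+1\in\{1,2\}$ (mod $2$) for the right and left rows of $\Theta$, let $kd$ be the length of its middle, and let $w=w_1\cdots w_{d/2}=\updown(\Theta)$ be read at $\beta_1<\cdots<\beta_{d/2}$ in $I_j=[a_j,b_j]$. The first observation is that \emph{any} $d$-small symbol with no $d/2$-cohook must have empty middle and no $\up$ in its up-down diagram. Indeed, if the middle has length at least $d$, then each bead in the upper half of the middle (row $j$, position $x$ with $x-d/2$ also in the middle) produces a cohook $(x,x-d/2)$ paired with the space in row $j+1$; and if the middle is empty and $w_i=\up$, then $(\beta_i,\beta_i-d/2)$ is itself a cohook. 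Hence, once we know $\Theta^\circ$ is $d$-small, its middle has length $0$ and $\updown(\Theta^\circ)\in\{\circ,\times,\down\}^{d/2}$.

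Next, I would argue by induction on the pair $(k,n_\up)$, where $n_\up$ is the number of $\up$'s in $w$, that $\Theta^\circ$ is $d$-small and that $\updown(\Theta^\circ)=w^\circ$ for a suitable choice of intervals $I_i^\circ$. The base case $(0,0)$ is trivial, as $\Theta$ itself is then a $d$-small cocore. When $k=0$ and $n_\up\ge 1$, removing the $d/2$-cohook $(\beta_i,\beta_i-d/2)$ at any $\up$ position yields a $d$-small symbol with the same right region, still empty middle, and up-down diagram obtained from $w$ by converting that $\up$ into a $\down$; the cocore of this new symbol coincides with $\Theta^\circ$, and induction on $(0,n_\up-1)$ applies.

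\textbf{The main obstacle} is the case $k\ge 1$: a single cohook removal inside a non-empty middle generically breaks $d$-smallness, so one cannot run the induction one cohook at a time. My plan is to package $d$ carefully chosen cohook removals into a single inductive step whose cumulative effect restores $d$-smallness with middle reduced to $(k-1)d$ and the same up-down string $w$ (up to the ambiguity in the choice of $I$'s that affects the $\times$-prefix and $\circ$-suffix of the word, as discussed in the remark after Example~\ref{exl:regionchoice}). Concretely, one sweeps through the middle, removing at each stage a cohook $(x,x-d/2)$ with $x$ in the upper half of the middle paired with a space in row $j+1$ (either still in the middle or already in the left region), tracking the alternating row swaps, and verifies at the end that the resulting $\beta$-sets satisfy $J_i\subseteq I_i^\prime$ for translated intervals $I_i^\prime$. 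Once this block reduction is established, induction on $(k-1,n_\up)$ completes the proof.
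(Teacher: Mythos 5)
Your overall architecture coincides with the paper's: induct on the length $kd$ of the middle region, and handle the case $k=0$ by removing the $\frac{d}{2}$-cohooks visible at the $\up$'s of $\updown(\Theta)$ (you peel them off one at a time, the paper removes them all at once; this difference is immaterial). You have also correctly located the crux, namely that for $k\ge 1$ a single cohook removal destroys $d$-smallness. However, the packaged step you sketch for $k\ge 1$ is a genuine gap, not a routine verification. The recipe fails as stated: if you only remove cohooks $(x,x-\frac{d}{2})$ with $x$ in the upper half of the middle region, the beads of the right region stay put while at least $\frac{d}{2}$ positions immediately below them in the same row $j$ are vacated; the resulting row then has $d_j-c_j\ge\frac{d}{2}$, so $J_j$ fits in no interval of $\frac{d}{2}$ integers and the symbol is no longer $d$-small. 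Test this on $d=4$ with $\Theta$ the empty bipartition of charge $(5,-3)$, so that $\updown(\Theta)=\up\up$ with middle region $[0,3]$: moving the beads at $2,3$ to the opposite row leaves row $1$ with beads $\{\dots,0,1\}\cup\{4,5\}$, which is visibly not $d$-small. The count of ``$d$ carefully chosen cohook removals'' per step is also off: reducing the middle of this example from length $4$ to length $0$ while preserving the up-down word takes $6$ removals, and in general the step uses $(k+1)d+n_\up-n_\down$ of them, where $n_\up$, $n_\down$ count the $\up$'s and $\down$'s.

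The packaging that works, and is what the paper's proof does, is to translate the entire block consisting of the middle region \emph{and} the right region left by $\frac{d}{2}$ into the opposite row: every bead of the right region, and every bead of the middle region outside its leftmost $\frac{d}{2}$-segment, is moved $\frac{d}{2}$ to the left and to the opposite row (each such move is a legitimate cohook removal because the opposite row is empty throughout the middle region), while a bead in the leftmost $\frac{d}{2}$-segment is moved only when the opposite row has a space $\frac{d}{2}$ to its left in the left region, i.e.\ exactly at the positions where $\updown(\Theta)$ reads $\up$ or $\circ$; finally one flips the symbol upside down if the total number of moves is odd. One then checks directly that the bead patterns in the left and right regions are unchanged, so $\updown$ is preserved while the middle shrinks by exactly $d$, and the induction closes with your (correct) $k=0$ argument.
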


\begin{proof}
We will induct on $k$ where $kd$ is the length of the middle region of the $d$-small symbol~$\Theta$. 
Suppose $k=0$, so the middle region has length $0$. Given an occurrence of $\up$ in $\updown(\Theta)$, there is a bead in the corresponding position of the right region and a space $\frac{d}{2}$ to its left and in the opposite row. Thus each $\up$ yields a removable cohook, and conversely, any removable cohooks are given in this way by the $\up$'s in $\updown(\Theta)$. Removing such a cohook moves the bead in question from the right region to the left region. Do this for each $\up$ in $\updown(\Theta)$, then flip the resulting symbol upside-down if the number of $\up$'s in $\updown(\Theta)$ is odd. No more cohooks can be removed from the resulting symbol, so we have found the cocore $\Theta^\circ$ of $\Theta$. It is clear that $\Theta^\circ$ has the desired description. 

\smallskip

Next, suppose by induction that if $\Psi$ is a $d$-small symbol whose middle region has length $kd$ then $\Psi^\circ$ has the desired description. Let $\Theta$ be a $d$-small symbol whose middle region has length $(k+1)d$. Partition the middle region into equal segments of length $\frac{d}{2}$. For the leftmost such segment of length $\frac{d}{2}$ (adjacent to the left region), move the $i$'th bead $\frac{d}{2}$ to the left and to the opposite row if there's a space in the $i$'th position there, thus, if $\up$ or $\circ $ is the $i$'th letter of $\updown(\Theta)$. For all other beads in the middle and right regions, they all belong to the same row. The opposite row has only empty spaces in the middle and right regions.
We move these beads $\frac{d}{2}$ to the left and to the opposite row. Finally, flip the symbol upside down if the total number of beads moved was odd. Call this symbol $\Psi$. We have removed some number of cohooks from $\Theta$ to obtain $\Psi$, so $\Psi$ has the same cocore as $\Theta$. We observe that $\Psi$ has the same pattern of beads and spaces in its left and right regions as $\Theta$. Thus $\updown(\Psi)=\updown(\Theta)$. However, the middle region of $\Psi$ has length $kd$.  Applying induction, $\Theta^\circ$ has the desired description.
\end{proof}

A $d$-small symbol $\Theta$ is determined by the length of its middle region together with $\updown(\Theta)$, up to flipping the symbol upside-down. In type $B$, that is when the defect of the symbol is odd, by convention the flip of the symbol upside-down labels the same unipotent character. Thus when $\Theta$ is the symbol of a type $B$ unipotent character, the preceding lemma completely characterizes the cocore of $\Theta$. In the case of type $D$ or $^2D$ unipotent characters, that is when the defect of the symbol is even, the lemma characterizes the cocore up to flipping it upside-down (these label two different unipotent characters in that case).

\begin{example}\label{exl:cocore} 
Take $d=12$ and the following $d$-small symbol $\Theta$:

\[
\TikZ{[scale=.4]
\draw
(-12,2)node[]{\hbox{row 2}}
(-12,1)node[]{\hbox{row 1}}
(16,2)node[fill,circle,inner sep=.5pt]{}
(16,1)node[fill,circle,inner sep=.5pt]{}
(15,2)node[fill,circle,inner sep=2pt]{} 
(15,1)node[fill,circle,inner sep=.5pt]{}
(14,2)node[fill,circle,inner sep=.5pt]{}
(14,1)node[fill,circle,inner sep=.5pt]{}
(13,2)node[fill,circle,inner sep=.5pt]{}
(13,1)node[fill,circle,inner sep=.5pt]{}
(12,2)node[fill,circle,inner sep=2pt]{}
(12,1)node[fill,circle,inner sep=.5pt]{}
(11,2)node[fill,circle,inner sep=2pt]{}
(11,1)node[fill,circle,inner sep=.5pt]{}
(10,2)node[fill,circle,inner sep=.5pt]{}
(10,1)node[fill,circle,inner sep=.5pt]{}
(9,2)node[fill,circle,inner sep=2pt]{}
(9,1)node[fill,circle,inner sep=.5pt]{}
(8,2)node[fill,circle,inner sep=2pt]{}
(8,1)node[fill,circle,inner sep=.5pt]{}
(7,2)node[fill,circle,inner sep=2pt]{}
(7,1)node[fill,circle,inner sep=.5pt]{}
(6,2)node[fill,circle,inner sep=2pt]{}
(6,1)node[fill,circle,inner sep=.5pt]{}
(5,2)node[fill,circle,inner sep=2pt]{}
(5,1)node[fill,circle,inner sep=.5pt]{}
(4,2)node[fill,circle,inner sep=2pt]{}
(4,1)node[fill,circle,inner sep=.5pt]{}
(3,2)node[fill,circle,inner sep=2pt]{}
(3,1)node[fill,circle,inner sep=.5pt]{}
(2,2)node[fill,circle,inner sep=2pt]{}
(2,1)node[fill,circle,inner sep=.5pt]{}
(1,2)node[fill,circle,inner sep=2pt]{}
(1,1)node[fill,circle,inner sep=.5pt]{}
(0,2)node[fill,circle,inner sep=2pt]{}
(0,1)node[fill,circle,inner sep=.5pt]{}
(-1,2)node[fill,circle,inner sep=2pt]{}
(-1,1)node[fill,circle,inner sep=.5pt]{}
(-2,2)node[fill,circle,inner sep=2pt]{}
(-2,1)node[fill,circle,inner sep=.5pt]{}
(-3,2)node[fill,circle,inner sep=2pt]{}
(-3,1)node[fill,circle,inner sep=.5pt]{} 
(-4,2)node[fill,circle,inner sep=2pt]{}
(-4,1)node[fill,circle,inner sep=2pt]{}
(-5,2)node[fill,circle,inner sep=2pt]{}
(-5,1)node[fill,circle,inner sep=.5pt]{}
(-6,2)node[fill,circle,inner sep=2pt]{}
(-6,1)node[fill,circle,inner sep=.5pt]{}
(-7,2)node[fill,circle,inner sep=2pt]{}
(-7,1)node[fill,circle,inner sep=2pt]{}
(-8,2)node[fill,circle,inner sep=2pt]{}
(-8,1)node[fill,circle,inner sep=.5pt]{} 
(-9,2)node[fill,circle,inner sep=2pt]{}
(-9,1)node[fill,circle,inner sep=2pt]{}
(15,0)node[]{12}
(9,0)node[]{6}
(3,0)node[]{0}
(-3,0)node[]{-6} 
(-9,0)node[]{-12}
(15.5,0.5)node{} to (15.5,2.5)node{}
(9.5,0.5)node{} to (9.5,2.5)node{}
(3.5,0.5)node{} to (3.5,2.5)node{}
(-2.5,0.5)node{} to (-2.5,2.5)node{}
(-8.5,0.5)node{} to (-8.5,2.5)node{}
;
}
\]

We have $\updown(\Theta)=\circ\times\up\circ\down\up.$ Then $\updown(\Theta^\circ)=\circ\times\down\circ\down\down$ and $\Theta^\circ$ is:

\[
\TikZ{[scale=.4]
\draw
(-9,2)node[]{\hbox{row 2}}
(-9,1)node[]{\hbox{row 1}}
(7,2)node[fill,circle,inner sep=.5pt]{}
(7,1)node[fill,circle,inner sep=.5pt]{}
(6,2)node[fill,circle,inner sep=.5pt]{}
(6,1)node[fill,circle,inner sep=.5pt]{}
(5,2)node[fill,circle,inner sep=.5pt]{}
(5,1)node[fill,circle,inner sep=.5pt]{}
(4,2)node[fill,circle,inner sep=.5pt]{}
(4,1)node[fill,circle,inner sep=.5pt]{}
(3,2)node[fill,circle,inner sep=.5pt]{}
(3,1)node[fill,circle,inner sep=.5pt]{}
(2,2)node[fill,circle,inner sep=2pt]{}
(2,1)node[fill,circle,inner sep=.5pt]{}
(1,2)node[fill,circle,inner sep=.5pt]{}
(1,1)node[fill,circle,inner sep=.5pt]{}
(0,2)node[fill,circle,inner sep=2pt]{}
(0,1)node[fill,circle,inner sep=2pt]{}
(-1,2)node[fill,circle,inner sep=2pt]{}
(-1,1)node[fill,circle,inner sep=2pt]{}
(-2,2)node[fill,circle,inner sep=2pt]{}
(-2,1)node[fill,circle,inner sep=.5pt]{}
(-3,2)node[fill,circle,inner sep=2pt]{}
(-3,1)node[fill,circle,inner sep=2pt]{}
(-4,2)node[fill,circle,inner sep=2pt]{}
(-4,1)node[fill,circle,inner sep=2pt]{}
(-5,2)node[fill,circle,inner sep=2pt]{}
(-5,1)node[fill,circle,inner sep=.5pt]{}
(-6,2)node[fill,circle,inner sep=2pt]{}
(-6,1)node[fill,circle,inner sep=2pt]{}
(6.5,0.5)node{} to (6.5,2.5)node{}
(0.5,0.5)node{} to (0.5,2.5)node{}
(-5.5,0.5)node{} to (-5.5,2.5)node{}
;
}
\]
\end{example}

\begin{lemma}\label{lem:blocks}
Let $\Theta$ be a $d$-small symbol. 
The set of symbols 
\[
\{\Theta'\mid \Theta \hbox{ and } \Theta' \hbox{ belong to the same block and have the same charge }\}
\]
consists of those $d$-small symbols with the same charge whose up-down diagrams $\updown(\Theta')$ are given by all possible permutations of the $\up$'s and $\down$'s in $\updown(\Theta)$.
\end{lemma}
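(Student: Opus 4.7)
The plan is to reduce the statement to a direct consequence of Lemma~\ref{lem:cocore} together with the block classification recalled earlier, namely that two unipotent characters lie in the same $\ell$-block if and only if their symbols share the same $d/2$-cocore. After this reduction, the task becomes: among $d$-small symbols sharing the charge of $\Theta$, characterise those with the same cocore as $\Theta$.

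I would begin with the easier direction $(\Leftarrow)$. If $\updown(\Theta')$ is obtained from $\updown(\Theta)$ by rearranging the $\up$'s and $\down$'s (fixing the positions of the $\times$'s and $\circ$'s and preserving their multiset), then replacing every $\up$ by $\down$ in both words produces the same sequence. By Lemma~\ref{lem:cocore}, $\updown(\Theta^\circ)=\updown((\Theta')^\circ)$ and both cocores have empty middle region, so they agree as up-down diagrams. Provided I can verify that the two cocores also carry the same charge (and the same flip in type $D$), they must be identical as charged symbols, which places $\Theta'$ in the block of $\Theta$.

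The converse direction $(\Rightarrow)$ is where the actual work lies. Assuming $\Theta^\circ=(\Theta')^\circ$, Lemma~\ref{lem:cocore} immediately gives that the positions of $\times$ and $\circ$ in $\updown(\Theta)$ and $\updown(\Theta')$ agree and that the positions of $\{\up,\down\}$ taken jointly are the same; what I still owe is that the two multisets of $\up$'s and $\down$'s agree individually. I plan to extract this from the charge: each cohook removal alters the charge by $(s_1,s_2)\mapsto(s_2\pm 1,s_1\mp 1)$, so after tracking all the removals prescribed by the proof of Lemma~\ref{lem:cocore}---one for each $\up$ in the base case $k=0$, together with the explicit batch of moves in each inductive step that peels off one segment of the middle region---the cocore charge is pinned down by $\bs$, by the number of $\up$'s in $\updown(\Theta)$, and by $k$. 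Since $\Theta$ and $\Theta'$ start with the same $\bs$ and produce the same cocore (hence the same cocore charge), the $\up$-counts must agree. The same bookkeeping forces $k=k'$, for instance via the bipartition-size identity $|\blambda|=|\blambda^\circ|+\tfrac{d}{2}\cdot(\text{number of }d/2\text{-cohooks removed})$.

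The main obstacle I expect is the careful sign analysis of the charge change under a single cohook removal: one needs to check that the choice of $\pm 1$ is dictated purely by the local configuration (how the bead $x$ and the gap at $x-\tfrac{d}{2}$ sit relative to the current maxima of each row), so that different $\up$-positions in a fixed $\updown(\Theta)$ all contribute the same net shift to the cocore charge. This is a routine case analysis following the definitions in Section~\ref{subsec:comb}, but it is the one place where Lemma~\ref{lem:cocore} cannot simply be invoked as a black box. Once that book-keeping is under control, both implications assemble cleanly.
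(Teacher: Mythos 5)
Your outline reproduces the second half of the paper's argument (both directions ultimately rest on Lemma~\ref{lem:cocore} plus the cocore characterisation of blocks), but it is missing the first, and arguably most substantive, step of the forward inclusion. The lemma asserts that \emph{every} symbol $\Theta'$ in the same block as $\Theta$ with the same charge is itself $d$-small; only after that is established can Lemma~\ref{lem:cocore} be applied to $\Theta'$ to compare up-down diagrams. Your reduction ``the task becomes: among $d$-small symbols sharing the charge of $\Theta$, characterise those with the same cocore'' silently discards the possibility of non-$d$-small symbols in the block, which is precisely what has to be ruled out. This does not follow from the cocore description alone: a block consists of all symbols obtained from the cocore by adding the appropriate number of $d/2$-co-hooks, and such additions can in general spread beads far beyond the two intervals $I_1,I_2$. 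The paper closes this gap by a different mechanism, namely the identification of blocks with $\widehat{\mathfrak{sl}}_d$-weight spaces from \cite[Lem.~6.10]{DVV17}: two charged bipartitions with the same charge and the same weight have equal box-content multisets $n_i$ for all $i\in\mathbb{Z}/d$, and by Remark~\ref{rmk:dsmalloncontent} $d$-smallness is a condition on exactly this multiset, so it propagates across the block at fixed charge. If you want to avoid the weight-space input, you would have to prove directly that every way of adding the prescribed number of co-hooks to the cocore which returns to the charge $\bsigma$ yields a $d$-small symbol --- a statement your co-hook bookkeeping, as described, does not address.

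Two smaller remarks. First, for the step ``the multisets of $\up$'s and $\down$'s agree individually,'' your plan to track the $\pm 1$ charge shifts through every co-hook removal is workable but heavier than necessary: once both symbols are known to be $d$-small with the same charge, the same region intervals and the same positions of $\times$ and $\circ$, the number of beads in each row already pins down the number of $\up$'s (this is the paper's one-line argument). Second, your reverse direction is sound in outline; the verification you flag (that the two cocores carry the same charge and, in types $D$ and ${}^2D$, the same orientation) is genuinely needed, and the paper sidesteps it by exhibiting an explicit sequence of co-hook removals and additions realising a single $\up\leftrightarrow\down$ swap, which shows directly that the two symbols share the same charged cocore.
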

\begin{proof} 
We use the description of blocks in terms of weight subspaces for the $\widehat{\mathfrak{sl}}_d$-action given by \cite[Lem. 6.10]{DVV17}. By \cite[(2.4)]{DVV17} the weight of a charged bipartition $|\blambda,\mathbf{s}\rangle$ for this action is given by 
$$ \Lambda_{s_1} + \Lambda_{s_2} - \sum_{i \in \mathbb{Z}/d} n_i(\blambda,\mathbf{s}) \alpha_i - \Delta(\mathbf{s},d) \delta$$
where $\{\Lambda_i\}_{i \in \mathbb{Z}/d}$ (resp. $\{\alpha_i\}_{i \in \mathbb{Z}/d}$) are the fundamental weights (resp. the simple roots), $\delta$ is the imaginary root, $n_i(\blambda,\mathbf{s})$ is the number of boxes of content $i$ of the charged bipartition $|\blambda,\mathbf{s}\rangle$ and $\Delta(\mathbf{s},d)$ is an integer defined in \cite[(2.3)]{DVV17}. In particular, if $|\blambda,\mathbf{s}\rangle$ and $|\bmu,\mathbf{s}\rangle$ have the same weight then $n_i(\blambda,\mathbf{s}) = n_i(\bmu,\mathbf{s})$ for all $i \in \mathbb{Z}/d$. In particular, if $\Theta(\blambda,\mathbf{s})$ is $d$-small then so is $\Theta(\bmu,\mathbf{s})$ by Remark \ref{rmk:dsmalloncontent}. 

\smallskip

Let $\Theta'$ be a symbol with the same charge as $\Theta$ and belonging to the same block. By the above argument, $\Theta'$ is $d$-small. By Lemma~\ref{lem:cocore}, the up-down diagrams $\updown(\Theta)$ and $\updown(\Theta')$ agree up to changing some $\up$'s into $\down$'s and vice versa. Since they have the same charge, the number of  $\up$'s (hence the number of $\down$'s) in their diagrams must be equal. Consequently $\updown(\Theta)$ and $\updown(\Theta')$ differ only by a permutation of $\up$'s and $\down$'s.

\smallskip
Conversely, swapping one $\up$ with one $\down$ can be obtained as a succession of removing $2k+1$ co-hooks starting from the right region followed by a succession of adding $2k+1$ co-hooks starting from the left region, where $k$ is such that $kd$ is the size of the middle region. 
\end{proof}

\begin{example}
Take $d=10$ and consider the following $d$-small symbol $\Theta$:
\[
\TikZ{[scale=.4]
\draw
(-9,2)node[]{\hbox{row 2}}
(-9,1)node[]{\hbox{row 1}}
(5,2)node[fill,circle,inner sep=.5pt]{}
(5,1)node[fill,circle,inner sep=.5pt]{}
(4,2)node[fill,circle,inner sep=2pt]{}
(4,1)node[fill,circle,inner sep=.5pt]{}
(3,2)node[fill,circle,inner sep=.5pt]{}
(3,1)node[fill,circle,inner sep=.5pt]{}
(2,2)node[fill,circle,inner sep=2pt]{}
(2,1)node[fill,circle,inner sep=.5pt]{}
(1,2)node[fill,circle,inner sep=2pt]{}
(1,1)node[fill,circle,inner sep=.5pt]{}
(0,2)node[fill,circle,inner sep=2pt]{}
(0,1)node[fill,circle,inner sep=.5pt]{}
(-1,2)node[fill,circle,inner sep=2pt]{}
(-1,1)node[fill,circle,inner sep=.5pt]{}
(-2,2)node[fill,circle,inner sep=2pt]{}
(-2,1)node[fill,circle,inner sep=2pt]{}
(-3,2)node[fill,circle,inner sep=2pt]{}
(-3,1)node[fill,circle,inner sep=.5pt]{}
(-4,2)node[fill,circle,inner sep=2pt]{}
(-4,1)node[fill,circle,inner sep=2pt]{}
(-5,2)node[fill,circle,inner sep=2pt]{}
(-5,1)node[fill,circle,inner sep=.5pt]{}
(-6,2)node[fill,circle,inner sep=2pt]{}
(-6,1)node[fill,circle,inner sep=2pt]{}
(4,0)node[]{4}
(-1,0)node[]{-1}
(-6,0)node[]{-6}
(4.5,0.5)node{} to (4.5,2.5)node{}
(-0.5,0.5)node{} to (-0.5,2.5)node{}
(-5.5,0.5)node{} to (-5.5,2.5)node{}
;
}
\]
We have $\updown(\Theta)=\up\times\up\down\up$. There are four permutations of the set of $\up$'s and $\down$'s in $\updown(\Theta)$: 
\[
\{\down\times\up\up\up,\;\; \up\times\down\up\up,\;\; \up\times\up\down\up,\;\; \up\times\up\up\down\}.
\]
By permuting the beads in the left and right regions of the symbol $\Theta$ to match these permutations of its up-down diagram, we obtain all the symbols in the same series and block. For instance, $\down\times\up\up\up$ gives the symbol
\[
\TikZ{[scale=.4]
\draw
(-9,2)node[]{\hbox{row 2}}
(-9,1)node[]{\hbox{row 1}}
(5,2)node[fill,circle,inner sep=.5pt]{}
(5,1)node[fill,circle,inner sep=.5pt]{}
(4,2)node[fill,circle,inner sep=2pt]{}
(4,1)node[fill,circle,inner sep=.5pt]{}
(3,2)node[fill,circle,inner sep=2pt]{}
(3,1)node[fill,circle,inner sep=.5pt]{}
(2,2)node[fill,circle,inner sep=2pt]{}
(2,1)node[fill,circle,inner sep=.5pt]{}
(1,2)node[fill,circle,inner sep=2pt]{}
(1,1)node[fill,circle,inner sep=.5pt]{}
(0,2)node[fill,circle,inner sep=.5pt]{}
(0,1)node[fill,circle,inner sep=.5pt]{}
(-1,2)node[fill,circle,inner sep=2pt]{}
(-1,1)node[fill,circle,inner sep=.5pt]{}
(-2,2)node[fill,circle,inner sep=2pt]{}
(-2,1)node[fill,circle,inner sep=.5pt]{}
(-3,2)node[fill,circle,inner sep=2pt]{}
(-3,1)node[fill,circle,inner sep=.5pt]{}
(-4,2)node[fill,circle,inner sep=2pt]{}
(-4,1)node[fill,circle,inner sep=2pt]{}
(-5,2)node[fill,circle,inner sep=2pt]{}
(-5,1)node[fill,circle,inner sep=2pt]{}
(-6,2)node[fill,circle,inner sep=2pt]{}
(-6,1)node[fill,circle,inner sep=2pt]{}
(4,0)node[]{4}
(-1,0)node[]{-1}
(-6,0)node[]{-6}
(4.5,0.5)node{} to (4.5,2.5)node{}
(-0.5,0.5)node{} to (-0.5,2.5)node{}
(-5.5,0.5)node{} to (-5.5,2.5)node{}
;
}
\]

\end{example}

\subsection{Action of the crystal operators on $d$-small symbols} 
In this section, we explain how the action of the crystal operators $\tilde{e}_i$ on symbols translates to $\updown(\Theta)$ for a $d$-small symbol $\Theta$. 

\smallskip

To describe the action of crystal operators using $\updown(\Theta)$, we consider $i\in\bbZ/d\bbZ$ where $i$ labels the residue mod $d$ of a position in the right region if the right region lies in row $1$ of $\Theta$. Otherwise, the left region lies in row $1$ of $\Theta$, and then we let $i$ label the residue mod $d$ of a position in the left region. Match up the letters $w_1,\ldots, w_{\frac{d}{2}}$ in $\updown(\Theta)$ from left to right with the residues of the appropriate region. In Example \ref{exl:cocore}, we'd have:
\[
\TikZ{[scale=.5]
\draw
(1,1)node[]{1}
(2,1)node[]{2}
(3,1)node[]{3}
(4,1)node[]{4}
(5,1)node[]{5}
(6,1)node[]{6}
(1,0)node[]{$\circ$}
(2,0)node[]{$\times$}
(3,0)node[]{$\up$}
(4,0)node[]{$\circ$}
(5,0)node[]{$\down$}
(6,0)node[]{$\up$}
;
}
\]
We then consider the two adjacent letters $w_j, w_{j+1}\in\{\up,\down,\circ,\times\}$ in $\updown(\Theta)$ corresponding to the residues $i,i+1$:
\begin{equation}\label{eq:annoyingconvention}
\TikZ{[scale=.5]
\draw
(0,1)node[]{$i$}
(1.5,1)node[]{$i+1$}
(0,0)node[]{$w_j$}
(1.5,0)node[]{$w_{j+1}$}
;
}
\end{equation}
Continuing our example and taking $i=5$, we'd consider:
\[
\TikZ{[scale=.5]
\draw
(0,1)node[]{5}
(1,1)node[]{6}
(0,0)node[]{$\down$}
(1,0)node[]{$\up$}
;
}
\]

Then the effect of $\tilde{e}_i$ on $\updown(\blambda)$ is given by acting locally on these two letters as described in the next lemma.

\begin{lemma}\label{lem:crysops}
Let $\Theta$ be a $d$-small symbol and let $\updown(\Theta)$ be its up-down diagram. Label the letters of $\updown(\Theta)$ with the residues of the appropriate region as described above. The action of the crystal operators $\tilde{e}_i$ and $\tilde{f}_i$ on $\Theta$ is computed locally in $\updown(\Theta)$ on $w_j w_{j+1}$ as follows:

\begin{equation} \label{eq:eionsimples}
 \xymatrix{ \star \times  \ar@/^/[r]^{\widetilde{e}_i}
 \ar@/_/@{<-}[r]_{\widetilde{f}_i}  & \times \star & \circ \star \ar@/^/[r]^{\widetilde{e}_i}
 \ar@/_/@{<-}[r]_{\widetilde{f}_i}  & \star \circ & \circ \times  \ar@/^/[r]^{\widetilde{e}_i}
 \ar@/_/@{<-}[r]_{\widetilde{f}_i}  & \ar@/^/[r]^{\widetilde{e}_i}
 \ar@/_/@{<-}[r]_{\widetilde{f}_i}  \down \up &  \times \circ }
 \end{equation}
for $\star \in \{\up,\down\}$. The action on $\up \down $ is zero. 

\end{lemma}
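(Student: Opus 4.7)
The plan is to reduce the computation to the Kashiwara crystal action on charged bipartitions in a level-$2$ Fock space, using the crystal isomorphism (\ref{eq:fock}) supplied by \cite[Thm.~1.7]{DN}, and then translate the outcome back into the letters $w_{j_0}, w_{j_0+1}$ of $\updown(\Theta)$.

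First, I would locate all addable and removable $i$-nodes of the $d$-small symbol $\Theta$. Writing $r$ for the row housing the beads of the right region and $l$ for the row housing the spaces of the left region, the $d$-small structure forces $r \neq l$ whenever the middle region is non-empty. Outside their respective regions, row $r$ is uniformly beads (in the middle and below) and uniformly spaces (above), and symmetrically for row $l$; hence every addable or removable $i$-node of $\Theta$ lies either in the right region of row $r$ at the unique position $\equiv i + (r-1)\tfrac{d}{2} \pmod d$, or in the left region of row $l$ at the unique position $\equiv i + (l-1)\tfrac{d}{2} \pmod d$. The shift condition $b_l + \tfrac{d}{2} \equiv b_r \pmod d$ makes both positions correspond to the same index $j_0 \in \{1,\dots,d/2\}$, so only the letters $w_{j_0}$ and $w_{j_0+1}$ can be affected.

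Second, I would compute the Fock-space contents of the (up to) four candidate nodes: row-$r$ addable, row-$r$ removable, row-$l$ addable, row-$l$ removable. A short calculation using $\bs_t = \bsigma_t + (0, d/2)$ shows that the row-$l$ content is strictly smaller than the row-$r$ content, differing by $(k+1)d$ where $kd$ is the length of the middle region of $\Theta$. Listing the nodes by increasing content places the row-$l$ entry first and the row-$r$ entry second, so the Kashiwara signature has at most two entries. Decoding the pattern $(w_{j_0}, w_{j_0+1}) \in \{\up,\down,\circ,\times\}^2$ via Definition~\ref{symb2BS} then attaches a sign (or absence) to each entry in each of the $16$ possible cases.

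Third, I would apply the signature rule---cancel adjacent $-+$ pairs; the good removable $i$-node is the leftmost uncancelled $-$ and the good addable is the rightmost uncancelled $+$---to each case. Removing a row-$k$ $i$-node swaps the row-$k$ component of $(w_{j_0}, w_{j_0+1})$ from $(S,B)$ to $(B,S)$, and a direct tabulation over the $16$ patterns recovers the seven non-trivial $\widetilde{e}_i$-transitions displayed in (\ref{eq:eionsimples}), the vanishing on $\up\down$ (whose signature $-+$ cancels completely), and the vanishing on all remaining patterns (which have no uncancelled $-$). The corresponding statements for $\widetilde{f}_i$ follow by the same analysis with the rightmost uncancelled $+$ in place of the good removable.

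The main technical delicacy is pinning down the precise signature convention---order on contents, direction of cancellation, and choice of good $i$-node---so that it matches the Kashiwara crystal transferred from the Fock space by (\ref{eq:fock}). Once this is fixed, Steps one through three amount to a finite combinatorial verification; the Case-$A$ (right region in row $2$) and Case-$B$ (right region in row $1$) setups are handled in parallel, and the content calculation in Step two ensures that both cases produce the same local moves on the abstract alphabet $\{\up,\down,\circ,\times\}$.
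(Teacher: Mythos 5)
Your overall strategy coincides with the paper's: transfer the computation to the level-$2$ Fock space via (\ref{eq:fock}), observe that the Kashiwara $i$-word has at most two letters (one contributed by the left region and one by the right region, both landing at the same index of $\updown(\Theta)$), order them by charged content, and read off the local moves by the signature rule. However, the paper's proof is devoted almost entirely to the one point your Step two gets wrong: the claim that the row-$l$ content is strictly smaller than the row-$r$ content, ``differing by $(k+1)d$,'' holds only when the right region lies in row $2$. When the right region lies in row $1$, the shift $\bs_t=\bsigma_t+(0,d/2)$ works against you rather than for you: the two candidate $i$-boxes differ in charged content by $kd$, not $(k+1)d$. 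In particular, when the middle region is empty ($k=0$) the addable/removable $i$-boxes of $\lambda^1$ and $\lambda^2$ have \emph{equal} charged content, so ``strictly smaller'' fails and the ordering of the two signature letters is not determined by content at all.

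This is not cosmetic: if the two letters were read in the opposite order in that case, the good removable node would switch regions and, for instance, $\widetilde{e}_i$ would act nontrivially on $\up\down$ instead of on $\down\up$, contradicting the statement of the lemma. To close the gap you must invoke the tie-breaking convention in the order on $i$-boxes underlying the crystal of \cite{DN} (when charged contents agree, the box in $\lambda^1$ is deemed larger) and check that this again places the right-region letter after the left-region one — which is exactly the case analysis carried out in the paper. With that correction, the rest of your argument (the finite tabulation over the patterns $(w_j,w_{j+1})$ and the signature cancellation, including the complete cancellation for $\up\down$) goes through.
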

\begin{proof}
Suppose the Kashiwara $i$-word of $\Theta$ is non-empty for some $i$. Then it consists of at most two letters from the alphabet $\{+,-\}$, with at most one contributed by an addable or removable box in the left region and at most one contributed by an addable or removable box in the right region of $\Theta$. We need to check that if the $i$-word has two letters, then the letter contributed by the right region is larger than the letter contributed by the left region.

\smallskip

Suppose that the right region occurs in row $2$. Recall that $|\blambda,\bs_t\rangle=|\blambda,\bsigma_t+(0,\frac{d}{2})\rangle$. Then any addable or removable $i$-box in $\lambda^2$ has charged content at least $d$ greater than any addable or removable $i$-box of $\lambda^1$. Thus when comparing addable and removable $i$-boxes of $\blambda$, the one in $\lambda^2$ is larger than the one in $\lambda^1$. Next, suppose that the right region occurs in row $1$. If the middle region has length at least $d$, then as in the previous case, an addable or removable $i$-box of $\lambda^1$ will have charged content at least $d$ greater than an addable or removable $i$-box of $\lambda^2$, so will be larger. Otherwise, the right region is $\frac{d}{2}$ to the right of the left region. Then in $|\blambda,\bs_t\rangle$, the addable/removable $i$-boxes of $\lambda^1$ and $\lambda^2$ have the same charged content. In this situation, the box in $\lambda^1$ is consider larger. So again, the letter contributed to the $i$-word from the right region is larger.
\end{proof}

The lemma implies that the class in the Grothendieck group of a simple module corresponding to a $d$-small symbol lies in a representation of $\mathfrak{sl}_2$ of dimension at most $3$. The action on the standard modules is the same as for simple modules unless we are in the $3$-dimensional representation of $\mathfrak{sl}_2$, where we have
\begin{equation}\label{eq:eionstandard}
 e_i \big[\Delta_{\circ \times}\big] = \big[\Delta_{\down \up}\big] + \big[\Delta_{\up \down}\big] = f_i\big[\Delta_{\times \circ}\big]
 \end{equation}
 in the Grothendieck group. Here the subscripts correspond to the changes in the $\up \down$-diagram of the corresponding symbol.

\begin{example} Continuing with Example \ref{exl:cocore}, the lemma tells us that $$\tilde{e}_5\left(\circ\times\up\circ\down\up\right)=\circ\times\up\circ\times\circ$$ and thus $\tilde{e}_5$ acts on $\Theta$ by moving the rightmost bead in the symbol one position to the left.
\end{example}

\subsection{Highest weights for the $\widehat{\mathfrak{sl}}_d$-action}

For every charge $\bsigma=(\sigma_1,\sigma_2)\in\bbZ^2$, the symbol $|\emptyset.\emptyset,\bsigma\rangle$ labels a cuspidal unipotent character of the appropriate group (depending on the charge of the symbol) in characteristic $0$, see \S\ref{ssec:classification}. When reduced module $\ell$, it remains cuspidal \cite{DM}.

\smallskip

We now explain the classification of modular cuspidals labeled by non-empty $d$-small symbols. The bipartition of such a symbol is a rectangle concentrated in one component, with a condition on the charged content of the removable box of the rectangle. In the case that $\Theta$ is a $d$-small symbol, it follows from \cite{DVV17,Nor} and the rule for computing the Heisenberg or $\mathfrak{sl}_\infty$ crystal \cite{Ger,GerNor} that $S_\Theta$ is cuspidal if and only if $[S_\Theta]$ is a highest weight vector for $\widehat{\mathfrak{sl}}_d$, that is, if and only if $E_iS_\Theta=0$ for all $i\in\bbZ/d$. However, we will not need the condition that $S_\Theta$ is cuspidal, but only the condition of its being annihilated by every $E_i$. The description of such $\Theta$ follows immediately from Lemma \ref{lem:crysops}.

\begin{corollary}\label{cor:cusp}
Let $\Theta$ be a $d$-small symbol. We have
 $\tilde{e}_i\Theta=0$ for all $i\in\bbZ/d$ if and only if in $\updown(\blambda)$, all $\times$'s precede all $\up$'s which precede all $\down$'s which precede all $\circ$'s.
\end{corollary}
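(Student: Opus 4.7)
The plan is to read off the claim directly from Lemma~\ref{lem:crysops}, which gives the local action of each $\tilde{e}_i$ on the adjacent pair $(w_j, w_{j+1})$ of $\updown(\Theta)$ labeled by residues $(i, i+1)$. First I would enumerate the sixteen possible adjacent pairs $w_j w_{j+1}$ over the alphabet $\{\times,\up,\down,\circ\}$ and, reading off diagram~\eqref{eq:eionsimples} together with the explicit statement that $\tilde{e}_i(\up\down) = 0$, list the six pairs with non-zero $\tilde{e}_i$-image: $\up\times$, $\down\times$, $\circ\up$, $\circ\down$, $\circ\times$, and $\down\up$. Under the total order $\times < \up < \down < \circ$, these are precisely the strictly decreasing pairs, so $\tilde{e}_i$ annihilates $w_j w_{j+1}$ if and only if $w_j \le w_{j+1}$ in this order.

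Since a finite word in a totally ordered alphabet is sorted exactly when every adjacent pair is non-decreasing, the local vanishing condition across all pairs forces $\updown(\Theta)$ globally to take the block form $\times\cdots\up\cdots\down\cdots\circ$, which is the conclusion. The only bookkeeping step I would need to justify is that the $d/2-1$ adjacent pairs in $\updown(\Theta)$ account for only $d/2-1$ of the $d$ residues in $\bbZ/d\bbZ$; for the remaining residues I would check directly that $\tilde{e}_i\Theta=0$ automatically. Here the $d$-small structure does the work: the row containing the right region is all-beads to the left of that region and all-spaces to its right, while the opposite row is all-beads to the left of the left region and all-spaces throughout the middle and right regions. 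A short residue computation from the definition of a removable $i$-box in Section~\ref{subsec:comb} then shows that no removable $i$-box can exist for any residue outside the range covered by the pairs. This residue bookkeeping is the only place where any care is required; the rest of the argument is an immediate reading of Lemma~\ref{lem:crysops}.
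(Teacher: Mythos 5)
Your proposal is correct and matches the paper's (unstated) argument: the paper simply asserts that the corollary ``follows immediately from Lemma~\ref{lem:crysops}'', and your reading of \eqref{eq:eionsimples} as saying that $\tilde{e}_i$ kills the pair $w_jw_{j+1}$ exactly when $w_j\le w_{j+1}$ for the order $\times<\up<\down<\circ$ is the intended deduction. Your extra bookkeeping for the $d/2+1$ residues not covered by an adjacent pair is also sound, since the $d$-smallness constraints on the two rows force every removable box (in either row) to have residue congruent to one of the $d/2-1$ positions $a,\dots,a+\frac{d}{2}-2$ of the right region, so $\tilde{e}_i$ vanishes for the remaining residues for lack of any removable $i$-box.
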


\begin{remark}
Let $\Theta$ be a $d$-small symbol on which $\tilde{e}_i$ acts by $0$ for all $i\in\bbZ/d$. Set $w=\#\{\up\hbox{'s in }\updown(\blambda)\}$ and $h=\#\{\down\hbox{'s in }\updown(\blambda)\}$.  Set $\lambda=(w^h)$.
Then $\Theta=\Theta(\blambda,\bsigma)$ where $\blambda=\lambda.\emptyset$ if the left region is in row $1$ and $\blambda=\emptyset.\lambda$ if the left region is in row $2$. The charge $\bsigma=(\sigma_1,\sigma_2)$ is given by: \begin{enumerate}
\item\label{cusp1} $\sigma_1+w-h+kd=\sigma_2+\frac{d}{2}$ for some $k\in\mathbb{N}\setminus\{0\}$ if $\blambda=(w^h).\emptyset$,
\item\label{cusp2} $\sigma_1=\sigma_2+\frac{d}{2}+w-h+kd$ for some $k\in\mathbb{N}$ if $\blambda=\emptyset.(w^h)$.
\end{enumerate}
\end{remark}

\begin{example}\label{exl:d22}
We take $d=22$ and consider the following $d$-small symbol:
\[
\TikZ{[scale=.4]
\draw
(-13,1.5)node[]{$\Theta=$}
(14,2)node[]{\hbox{row 2}}
(14,1)node[]{\hbox{row 1}}
(12,2)node[fill,circle,inner sep=.5pt]{}
(12,1)node[fill,circle,inner sep=.5pt]{}
(11,2)node[fill,circle,inner sep=.5pt]{}
(11,1)node[fill,circle,inner sep=.5pt]{}
(10,2)node[fill,circle,inner sep=.5pt]{}
(10,1)node[fill,circle,inner sep=.5pt]{}
(9,2)node[fill,circle,inner sep=.5pt]{}
(9,1)node[fill,circle,inner sep=.5pt]{}
(8,2)node[fill,circle,inner sep=.5pt]{}
(8,1)node[fill,circle,inner sep=.5pt]{}
(7,2)node[fill,circle,inner sep=.5pt]{}
(7,1)node[fill,circle,inner sep=.5pt]{}
(6,2)node[fill,circle,inner sep=.5pt]{}
(6,1)node[fill,circle,inner sep=.5pt]{}
(5,2)node[fill,circle,inner sep=.5pt]{}
(5,1)node[fill,circle,inner sep=.5pt]{}
(4,1)node[fill,circle,inner sep=2pt]{}
(4,2)node[fill,circle,inner sep=.5pt]{}
(3,1)node[fill,circle,inner sep=2pt]{}
(3,2)node[fill,circle,inner sep=.5pt]{}
(2,1)node[fill,circle,inner sep=2pt]{}
(2,2)node[fill,circle,inner sep=.5pt]{}
(1,1)node[fill,circle,inner sep=2pt]{}
(1,2)node[fill,circle,inner sep=.5pt]{}
(0,1)node[fill,circle,inner sep=2pt]{}
(0,2)node[fill,circle,inner sep=.5pt]{}
(-1,1)node[fill,circle,inner sep=2pt]{}
(-1,2)node[fill,circle,inner sep=.5pt]{}
(-2,1)node[fill,circle,inner sep=2pt]{}
(-2,2)node[fill,circle,inner sep=2pt]{}
(-3,1)node[fill,circle,inner sep=2pt]{}
(-3,2)node[fill,circle,inner sep=2pt]{} 
(-4,1)node[fill,circle,inner sep=2pt]{}
(-4,2)node[fill,circle,inner sep=2pt]{}
(-5,1)node[fill,circle,inner sep=2pt]{}
(-5,2)node[fill,circle,inner sep=2pt]{}
(-6,1)node[fill,circle,inner sep=2pt]{}
(-6,2)node[fill,circle,inner sep=2pt]{}
(-7,1)node[fill,circle,inner sep=2pt]{}
(-7,2)node[fill,circle,inner sep=.5pt]{}
(-8,1)node[fill,circle,inner sep=2pt]{}
(-8,2)node[fill,circle,inner sep=.5pt]{}
(-9,1)node[fill,circle,inner sep=2pt]{}
(-9,2)node[fill,circle,inner sep=.5pt]{}
(-10,1)node[fill,circle,inner sep=2pt]{}
(-10,2)node[fill,circle,inner sep=2pt]{}
(-11,1)node[fill,circle,inner sep=2pt]{}
(-11,2)node[fill,circle,inner sep=2pt]{}

(11,0)node[]{11} 
(0,0)node[]{0}
(-11,0)node[]{-11} 
(11.5,0.5)node{} to (11.5,2.5)node{}
(0.5,0.5)node{} to (0.5,2.5)node{}
(-10.5,0.5)node{} to (-10.5,2.5)node{}
;
}
\]
Then $\updown(\Theta)=\times \up\up\up\down\down\down\down\down\circ\circ$. By Corollary \ref{cor:cusp}, $\tilde{e}_i\Theta=0$ and therefore $E_i S_\Theta=0$ for all $i\in\bbZ$. In partition notation, $\blambda=\emptyset.(3^5)$ and the unipotent character labeled by $\Theta$ belongs to the $B_{4^2+4}$-series.
\end{example}

\section{Decomposition numbers in a $d$-small Harish-Chandra series of a block}\label{sec:proof}

Our main theorem is a closed combinatorial formula for the entries of the square submatrix of the decomposition matrix cut out by a $d$-small Harish-Chandra series within a block.

\begin{theorem}\label{thm:main}
Suppose $\Theta$ and $\Psi$ are symbols of the same charge, and that $\Theta$ is $d$-small. Then
\[
[P_\Theta : \Delta_\Psi]=\begin{cases} 1 \hbox{ if }\updown(\Psi) \hbox{ is obtained from } \updown(\Theta) \hbox{ by reversing the orientation }\\ \quad\hbox{on a subset of the cups of }\cupdown(\Theta), \\ 0 \hbox{ otherwise}. \end{cases}
\]
\end{theorem}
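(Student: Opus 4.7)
My plan is to induct on $n(\Theta)$, the minimal number of Kashiwara operators $\tilde{e}_i$ ($i \in \bbZ/d$) required to bring $\Theta$ to a symbol annihilated by every $E_i$; any such highest-weight symbol has, by Corollary~\ref{cor:cusp}, $\updown$-diagram of the form $\times\cdots\times\up\cdots\up\down\cdots\down\circ\cdots\circ$ and hence no cups. In the base case $n(\Theta) = 0$, the simple $S_\Theta$ is cuspidal, so $P_\Theta \cong \widetilde{\Delta}_\Theta$ and $[P_\Theta : \Delta_\Psi] = \delta_{\Theta,\Psi}$. This matches the theorem, since the only admissible subset of the empty set of cups is itself empty.

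For the inductive step with $n(\Theta) > 0$, I choose $i$ such that $\tilde{e}_i$ decreases $n(\Theta)$; by Lemma~\ref{lem:crysops}, the relevant pair $w_j w_{j+1}$ has one of the patterns $\star\times$, $\circ\star$, $\circ\times$, $\down\up$. The first two cases ($\star\times \to \times\star$ and $\circ\star \to \star\circ$) preserve the cup structure, $E_i$ acts as a single local swap on every standard $\Delta_\Psi$ in the block of $\Theta$, and applying Proposition~\ref{prop:decnumbers} with $n=1$ transports the formula to $\Theta$ via the induced bijection $\Psi \leftrightarrow \Psi'$, which is compatible with cup reversals. In the case $\circ\times \to \down\up$, I take $n = 2$ in Proposition~\ref{prop:decnumbers}, so that $\tilde{e}_i^2 \Theta$ has $\times\circ$ at $(j,j+1)$ and the same cups as $\Theta$; equation \eqref{eq:eionstandard} together with $E_i \Delta_{\down\up} = E_i \Delta_{\up\down} = \Delta_{\times\circ}$ gives $E_i^{(2)} \Delta_\Psi = \Delta_{\Psi''}$, and the induction proceeds identically.

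The substantive case is $\down\up \to \times\circ$, which removes the innermost cup $c_1$ at $(j,j+1)$ from $\cupdown(\Theta)$. For $\Psi$ in the block of $\Theta$, the letters at $(j,j+1)$ lie in $\{\down\up, \up\down, \down\down, \up\up\}$; a direct box-count using \eqref{eq:eionsimples}-\eqref{eq:eionstandard} and the $d$-small structure shows that both $\Delta_{\down\up}$ and $\Delta_{\up\down}$ map under $E_i$ to the single standard $\Delta_{\Psi'}$ with $\times\circ$ at $(j,j+1)$, while $E_i \Delta_{\down\down} = E_i \Delta_{\up\up} = 0$, and $E_i^2 \Delta_\Psi = 0$ throughout. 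Proposition~\ref{prop:decnumbers} then gives $[P_\Theta : \Delta_\Psi] = [P_{\Theta'} : \Delta_{\Psi'}]$, and the inductive hypothesis on $\Theta'$ (which has one fewer cup) identifies the $\Psi$ with nonzero decomposition number as exactly those reversing a subset of cups of $\cupdown(\Theta)$. I expect the main obstacle to be the two-to-one bookkeeping in this case: verifying that the collapse of $\{\Delta_{\down\up}, \Delta_{\up\down}\}$ to $\Delta_{\Psi'}$ under $E_i$ precisely mirrors the two-fold freedom of including or excluding $c_1$ in the reversed subset, and that the vanishings $E_i \Delta_{\down\down} = E_i \Delta_{\up\up} = 0$ correctly identify the $\Psi$ that cannot arise from any cup reversal.
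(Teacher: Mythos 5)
Your inductive step follows essentially the same route as the paper: apply Proposition~\ref{prop:decnumbers} with $n\in\{1,2\}$ chosen so that $E_i^{n}S_\Theta\neq 0$ and $E_i^{n+1}S_\Theta=0$, use Lemma~\ref{lem:crysops} and \eqref{eq:eionstandard} to track the local configurations, and observe that the only case creating or destroying a cup is $\down\up\leftrightarrow\times\circ$, where the two preimages $\down\up$ and $\up\down$ of $\Delta_{\Psi'}$ under $E_i$ account exactly for the two orientations of the new innermost cup. Your checks that $E_i\Delta_{\up\up}=E_i\Delta_{\down\down}=0$ and that $E_i^{n+1}\Delta_\Psi=0$ throughout (so that the hypothesis of Proposition~\ref{prop:decnumbers} is met) are the same points the paper handles via the remark following Lemma~\ref{lem:crysops}. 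Inducting on the crystal depth rather than on the rank of the group is an immaterial difference.

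The genuine gap is in your base case. From $E_iS_\Theta=0$ for all $i$ you conclude that $S_\Theta$ is cuspidal and then assert $P_\Theta\cong\widetilde{\Delta}_\Theta$. That last claim is false in general: the projective cover of a cuspidal simple module in a block of positive defect contains many ordinary irreducible constituents (this is exactly ``problem (1)'' of the introduction, which is resolved by unitriangularity, not by projectivity of a lift of $\Delta_\Theta$). What you actually need --- and all that is true --- is that $[P_\Theta:\Delta_\Psi]=\delta_{\Theta,\Psi}$ for $\Psi$ \emph{of the same charge} in the same block. This requires an argument: one must show that $\Theta$ is minimal in its block-and-series for the partial order with respect to which the decomposition matrix is unitriangular. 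The paper does this by noting that Corollary~\ref{cor:cusp} forces $\updown(\Theta)=\times\cdots\times\up\cdots\up\down\cdots\down\circ\cdots\circ$, so any other $\Psi$ in the block with the same charge is a nontrivial permutation of the $\up$'s and $\down$'s, hence has some $\up$ strictly to the right of every $\up$ of $\updown(\Theta)$; writing $\Theta=(X_1,X_2)$ and $\Psi=(Y_1,Y_2)$ this gives $\max(Y_1,Y_2)>\max(X_1,X_2)$ and therefore $\Psi\nleq\Theta$, so $[P_\Theta:\Delta_\Psi]=0$ by unitriangularity. Without some version of this minimality check your base case does not stand.
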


\begin{proof}
We can assume without loss of generality that $\Theta$ and $\Psi$ are in the same block, otherwise the decomposition number is zero. Consequently, $\Psi$ is also $d$-small by Lemma~\ref{lem:blocks}. If $S_\Theta$ is annihilated by $E_i$ for every $i\in\bbZ/d$, then $\updown(\Theta)$ has no cup by Corollary~\ref{cor:cusp} ; on the other hand, we claim that any charged symbol $\Psi$ in the same block and with the same charge as $\Theta$ will satisfy $\Psi  \nleq \Theta$ for the order defined in \cite[\S1.1.3]{DN}. Indeed, by Corollary~\ref{cor:cusp}, $\updown(\Theta)=\times\cdots\times\up\cdots\up\down\cdots\down\circ\cdots\circ$. Since $\updown(\Psi)$ is a non-trivial permutation of the $\up$'s and $\down$'s of $\updown(\Theta)$, some $\up$ in $\updown(\Psi)$ is to the right of every $\up$ in $\updown(\Theta)$. Writing $\Theta=(X_1,X_2)$ and $\Psi=(Y_1,Y_2)$, it follows that $\max(Y_1,Y_2)>\max(X_1,X_2)$ , so $\Psi\nleq \Theta$.
Therefore by unitriangularity $[P_\Theta : \Delta_\Psi] = 0$ unless $\Psi = \Theta$.

\smallskip

Now assume that $S_\Theta$ is not annihilated by every $E_i$. Then there exists $i \in \mathbb{Z}/d$ and $n \in \{1,2\}$ such that $E_i^n S_\Theta \neq 0$ and $E_i^{n+1} S_\Theta = 0$. By Proposition~\ref{prop:decnumbers} and the remark following Lemma~\ref{lem:crysops}
 we have
$$ [P_\Theta : \Delta_\Psi] = [P_{\widetilde{e}_i^n \Theta} : E_i^{(n)} \Delta_\Psi].$$
Write $\Theta' = \widetilde{e}_i^n \Theta$. By Lemma \ref{lem:crysops}, $\Theta'$ is again $d$-small. By the rules in \eqref{eq:eionsimples}, the only possible configurations in positions $i,i+1$ in $\updown(\Theta')$ are $\star \circ$ with $\star \in \{\times,\up,\down\}$ and $\times \star$ with $\star \in \{\circ,\up,\down\}$. In particular the configurations $\up \down$ and $\down \up$ cannot occur.

\smallskip

Assume by induction on the rank of the group that the theorem holds for $P_{\Theta'}$. The multiplicity $[P_\Theta : \Delta_\Psi]$ is non-zero if and only if there exists a charged symbol $\Psi'$ such that $\Delta_{\Psi'}$ occurs in $E_i^{(n)} \Delta_\Psi$ and such that $\updown(\Psi')$ is obtained from $\updown(\Theta')$ by reversing the orientation on a subset of cups of $\cupdown(\Theta')$. The constraints on $\updown(\Theta')$ given in the previous paragraph force $E_i^{(n)} \Delta_\Psi = \Delta_{\Psi'}$ by \eqref{eq:eionstandard}, so that $[P_\Theta : \Delta_\Psi] = 1$. 

\smallskip

In the configuration $\times \circ$ for $\Psi'$ (and hence $\Theta'$), suppose we are in the case $n=1$. We must have the configuration $\down \up$ for $\Theta$, as $\Theta=\tilde{f}_i\tilde{e}_i\Theta=\tilde{f}_i\Theta'$. That is, in $\updown(\Theta)$ we have $w_jw_{j+1}=\down\up$, where $w_j$ corresponds to residue $i$ as in (\ref{eq:annoyingconvention}). Hence in $\cupdown(\Theta)$ there is a cup connecting $w_j$ and $w_{j+1}$. However, in $\updown(\Theta')$ we have $w_j'w_{j+1}'=\times\circ$ so there is no cup connecting $w_j'$ and $w_{j+1}'$ in $\cupdown(\Theta')$. Since all other letters in $\updown(\Theta)$ and $\updown(\Theta')$ are the same, the cup diagram $\cupdown(\Theta)$ is thus obtained from the cup diagram $\cupdown(\Theta')$ by inserting a cup at positions $i$ and $i+1$. On the other hand, by (\ref{eq:eionstandard}) there are two possibilities for $\Psi$ in positions $i$ and $i+1$, namely $\down \up$ and $\up \down$. It follows that $\updown(\Psi)$ is obtained from $\updown(\Theta)$ by reversing the orientation on a subset of cups of $\cupdown(\Theta)$.
\smallskip

For all the other configurations, $\Psi$ is uniquely determined from $\Psi'$. We conclude that in all the cases, we have that $\updown(\Psi)$ is obtained from $\updown(\Theta)$ by reversing the orientation on a subset of cups of $\cupdown(\Theta)$.
\end{proof}

In the setting of Theorem \ref{thm:main}, let $h$ be the number of $\up$'s and $w$ be the number of $\down$'s in the up-down diagrams of $d$-small symbols in the same block. The rule for computing the decomposition numbers presented in Theorem \ref{thm:main} is the same as Brundan-Stroppel's rule computing the multiplicities of Verma modules in projective modules in the highest weight category of finite-dimensional representations of the Khovanov arc algebra $K_h^w$ \cite{BS1}. Stroppel showed that the latter module category is equivalent to the category of perverse sheaves on the Grassmannian $\mathrm{Gr}(h,h+w)$ of $h$-planes in $\mathbb{C}^{h+w}$, which in turn is equivalent to the principal block of the parabolic category $\mathcal{O}^{\mathfrak{p}}$ for the parabolic $\mathfrak{p}$ with Levi $\mathfrak{gl}_h\times\mathfrak{gl}_w$ in $\mathfrak{gl}_{h+w}$ \cite{Catharina}. This identifies the decomposition numbers in question with the value of parabolic Kazhdan-Lusztig polynomials evaluated at $1$. In \cite{BDFHN}, another way of obtaining this rule uses an oriented version of the Temperley-Lieb algebra.

\smallskip

We deduce that the decomposition numbers can be computed from Kazhdan--Lusztig polynomials. 

\begin{corollary}\label{cor:final}
The entries of the square submatrix of the decomposition matrix given by the formula of Theorem \ref{thm:main} are given by parabolic Kazhdan-Lusztig polynomials of type \linebreak $(W,P)=(S_{h+w}, S_h\times S_{w})$ evaluated at $1$.
\end{corollary}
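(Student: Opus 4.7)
The plan is to observe that Corollary~\ref{cor:final} is essentially a restatement of Theorem~\ref{thm:main} once we identify the combinatorial rule there with a known one. So the proof will not require new arguments but rather a careful citation chain. First I would fix notation by letting $h$ and $w$ denote the number of $\up$'s and $\down$'s in $\updown(\Theta)$; by Lemma~\ref{lem:blocks}, these are invariants of the block containing $\Theta$ (and hence of the submatrix under consideration), so the statement makes sense.

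The core step is to match Theorem~\ref{thm:main} with the Brundan--Stroppel formula. In \cite{BS1}, Brundan and Stroppel describe the projective--Verma multiplicities in the highest weight category of finite-dimensional modules over the Khovanov arc algebra $K_h^w$ by exactly the rule appearing in Theorem~\ref{thm:main}: a projective $P_\Theta$ has $\Delta_\Psi$ as a composition factor with multiplicity $1$ if and only if $\updown(\Psi)$ is obtained from $\updown(\Theta)$ by reversing the orientation of a subset of the cups of $\cupdown(\Theta)$, and $0$ otherwise. Thus the two decomposition matrices coincide entry by entry, and it suffices to show that the Khovanov arc algebra decomposition numbers are values at $1$ of the relevant parabolic Kazhdan--Lusztig polynomials.

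For this last point I would invoke Stroppel's equivalence \cite{Catharina} between $K_h^w\nmod$ and the category of perverse sheaves on the Grassmannian $\mathrm{Gr}(h,h+w)$, equivalently the principal block of the parabolic category $\mathcal{O}^{\mathfrak{p}}$ for $\mathfrak{gl}_{h+w}$ with Levi $\mathfrak{gl}_h\times\mathfrak{gl}_w$. Under this equivalence, the projectives $P_\Theta$ correspond to projective covers of simples, the $\Delta_\Psi$ to parabolic Verma modules, and the decomposition numbers $[P_\Theta:\Delta_\Psi]$ are computed by the Kazhdan--Lusztig conjecture for parabolic category $\mathcal{O}$ (proved in this setting by Beilinson--Bernstein and Brylinski--Kashiwara, with the parabolic version due to Casian--Collingwood and Soergel), as the values at $1$ of the parabolic Kazhdan--Lusztig polynomials for $(W,P)=(S_{h+w},S_h\times S_w)$. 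Chaining these identifications with Theorem~\ref{thm:main} gives the corollary.

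There is no real obstacle: each step is a citation. The only mild subtlety is a bookkeeping one, namely checking that the indexing of $\updown$-diagrams here matches the Brundan--Stroppel weight diagrams of $K_h^w$ (up to the decoration by $\circ$ and $\times$, which play no role in the decomposition number formula), and that the order of composition factors expected on each side agrees; but this is already implicit in the proof of Theorem~\ref{thm:main} and in \cite{BS1}, so it requires no additional work.
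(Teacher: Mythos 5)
Your proposal is correct and follows essentially the same route as the paper: identify the rule of Theorem~\ref{thm:main} with Brundan--Stroppel's projective--standard multiplicity formula for the Khovanov arc algebra $K_h^w$ from \cite{BS1}, then pass through Stroppel's equivalence \cite{Catharina} with perverse sheaves on $\mathrm{Gr}(h,h+w)$ and the principal block of parabolic category $\mathcal{O}^{\mathfrak{p}}$ to read off the entries as parabolic Kazhdan--Lusztig polynomials for $(S_{h+w},S_h\times S_w)$ evaluated at $1$. Your extra remarks on the provenance of the parabolic Kazhdan--Lusztig conjecture and on matching the indexing conventions are harmless elaborations of what the paper leaves implicit.
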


\bibliographystyle{amsalpha}
%%%%%%%%%%%%%%%%%%%%%%%%%%%%%%%%%%%%%%%%%%%%%%%%%%%%%%%%%%%%%%%

\end{document}